\documentclass[11pt]{article}
\usepackage[utf8]{inputenc}
\usepackage[dvipsnames]{xcolor}
\usepackage{graphicx}
\usepackage{amsmath}
\usepackage{amsfonts}
\usepackage{amssymb}
\usepackage{amsthm}
\usepackage{thmtools}
\usepackage[margin=25mm]{geometry}

\setlength{\parindent}{0pt}
\setlength{\parskip}{10pt}
\usepackage{caption}
\usepackage{enumitem,calc}
\usepackage[longnamesfirst,numbers,sort&compress]{natbib}

\usepackage[unicode=true,hypertexnames=false]{hyperref}
\hypersetup{
colorlinks,
linkcolor={black},
citecolor={black},
urlcolor={blue!60!black}
}

\usepackage[noabbrev,capitalise,nameinlink]{cleveref}
\crefname{lemma}{Lemma}{Lemmas}
\crefname{theorem}{Theorem}{Theorems}
\crefname{corollary}{Corollary}{Corollaries}
\crefname{claim}{Claim}{Claims}
\crefname{proposition}{Proposition}{Propositions}
\crefname{figure}{Figure}{Figures}

\bibliographystyle{plainnat}

\theoremstyle{plain}
\newtheorem{theorem}{Theorem}
\newtheorem{proposition}[theorem]{Proposition}
\newtheorem{lemma}[theorem]{Lemma}

\newenvironment{proofofclaim}[1][Proof.]{\proof[#1]}{\endproof}

\newtheorem{claim}{Claim}
\usepackage{etoolbox}
\AtEndEnvironment{proof}{\setcounter{claim}{0}}

\theoremstyle{definition}

\DeclareMathOperator{\dist}{dist}


\newcommand{\scr}[1]{\mathcal{#1}}
\newcommand{\ds}[1]{\mathbb{#1}}

\newcommand{\defn}[1]{\textcolor{Maroon}{\emph{#1}}}
\newcommand{\RanIn}[2]{\mathbb{#1}^{\geq#2}}

\renewcommand{\geq}{\geqslant}
\renewcommand{\leq}{\leqslant}

\title{\bf An alternative characterisation of graphs\\ 
quasi-isometric to graphs of bounded treewidth}
\author{Marc Distel\,\footnotemark[2]}
\date{\today}

\footnotetext[2]{School of Mathematics, Monash University, Melbourne, Australia (\texttt{marc.distel@monash.edu}). Research supported by Australian Government Research Training Program Scholarship.}

\begin{document}
\maketitle

\setlength{\parindent}{0pt}
\setlength{\parskip}{10pt}

\begin{abstract}
    Quasi-isometry is a measure of how similar two graphs are at `large-scale'. Nguyen, Scott, and Seymour [arXiv:2501.09839]  and Hickingbotham [arXiv:2501.10840] independently gave a characterisation of graphs quasi-isometric to graphs of treewidth $k$. In this paper, we give a new characterisation of such graphs. Specifically, we show that such graphs $G$ are characterised by the existence of a partition whose quotient has treewidth at most $k$ and such that each part has bounded weak diameter in $G$. The primary contribution of our characterisation is a structural description of graphs that admit such a quasi-isometry. This differs from the characterisation mentioned above, which primarily shows the existence of such a quasi-isometry. The characterisations are complementary, and neither immediately implies the other.
\end{abstract}

\newpage

\section{Introduction}

A tree-decomposition of a graph $G$ identifies a tree-like structure in the graph. Specifically, for a tree $T$, a \defn{$T$-decomposition} of $G$ consists of a collection of subsets $(J_t:t\in V(T))$ (called \defn{bags}) of $V(G)$ such that (1) for each $v\in V(G)$, the vertices $t\in V(T)$ such that $v\in J_t$ induce a nonempty subtree of $T$, and (2) for each $uv\in E(G)$, there exists $t\in V(T)$ such that $u,v\in J_t$. A \defn{tree-decomposition} is a $T$-decomposition for some tree $T$.

A particularly important property of a tree-decomposition is its \defn{width}, which is the maximum size of a bag minus $1$. The \defn{treewidth} of a graph $G$ is the minimum width of a tree-decomposition of $G$, and can be thought of as a measure of how similar a graph is to a tree.

Treewidth is an important and well-studied properties of graphs, but is unbounded on several relatively simple classes of graphs. For example, the complete graph $K_n$ has treewidth $n-1$. This can restrict the usefulness of the parameter, as having even one large clique within the entire structure of the graph immediately disqualifies it from having small treewidth. One way to bypass this restriction is to only require that a graph is `metrically similar' to a graph of bounded treewidth. Specifically, we ask that they are related via `quasi-isometry'.

For $c\in \RanIn{R}{1}$, a map $\phi:V(G)\mapsto V(H)$ is a \defn{$c$-quasi-isometry} from a graph $G$ to a graph $H$ if:

\begin{enumerate}
    \item for each pair of distinct $u,v\in V(G)$,
    \begin{enumerate}
        \item $\dist_G(u,v)\leq c\dist_H(\phi(u),\phi(v))+c$, and
        \item $\dist_H(\phi(u),\phi(v))\leq c\dist_G(u,v)+c$, and
    \end{enumerate}
    \item for each $h\in V(H)$, there exists $v\in V(G)$ such that $\dist_H(h,\phi(v))\leq c$.
\end{enumerate}
The first condition can be rewritten as $\dist_G(u,v)/c - 1\leq \dist_H(\phi(u),\phi(v))\leq c\dist_G(u,v)+c$. The second condition is to ensure that the map is `almost' surjective. As such, we refer to it as the \defn{quasi-surjectivity} condition. It gives rise to a $c'$-quasi-isometry from $H$ to $G$, where $c'$ is not much larger than $c$.

A graph $G$ is \defn{$c$-quasi-isometric} to a graph $H$ if there exists a $c$-quasi-isometry from $G$ to $H$. By quasi-surjectivity, $H$ is $c'$-quasi-isometric to $G$.

A natural question is whether a `coarse' equivalent of the width of a tree-decomposition gives a quasi-isometry to a graph of bounded treewidth. Specifically, instead of counting the number of vertices in a bag, we count the fewest number of sets of some fixed weak diameter needed to cover a bag. Here, the \defn{weak diameter} of a set $S\subseteq V(G)$ in a graph $G$ is the maximum distance between two vertices in $G$ (as measured in $G$, not $G[S]$).

\citet{Nguyen2025} showed the following.

\begin{theorem}
    \label{sufficientCondition}
    Let $k\in \ds{N}$ and $\ell\in \RanIn{R}{0}$. If a graph $G$ admits a tree-decomposition where each bag is the union of at most $k$ sets each of weak diameter at most $\ell$, then $G$ is $2(k+2)\ell$-quasi-isometric to a graph of treewidth at most $k$.
\end{theorem}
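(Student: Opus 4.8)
The plan is to realise $H$ as a quotient of $G$: I will look for a partition $\scr{P}$ of $V(G)$ such that every part has weak diameter $O(k\ell)$ in $G$ and the quotient graph $H:=G/\scr{P}$ has treewidth at most $k$, and then the map $\phi$ sending each vertex of $G$ to its part will be the required quasi-isometry. Fix a tree-decomposition $(J_t:t\in V(T))$ of $G$ realising the hypothesis, write $J_t=S_{t,1}\cup\dots\cup S_{t,k}$ with $\wdiam_G(S_{t,i})\leq\ell$ (allowing empty $S_{t,i}$), assume $G$ and $T$ are connected, and root $T$ at a node $r$.

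The starting point is the ``topmost'' partition: for $v\in V(G)$ let $t_v$ be the node of $T$ nearest $r$ with $v\in J_{t_v}$, choose $i_v$ with $v\in S_{t_v,i_v}$, and put $P_{t,i}:=\{v: t_v=t,\ i_v=i\}$. Since $P_{t,i}\subseteq S_{t,i}$, these preliminary parts already have weak diameter $\leq\ell$, and $T$ is a natural candidate decomposition tree for $G/\{P_{t,i}\}$. The trouble is that this quotient need not have bounded treewidth: a single set $S_{s,j}$ deep in $T$ can contain vertices whose topmost nodes are many distinct ancestors of $s$, and since these vertices can be pairwise adjacent in $G$ this produces arbitrarily large cliques in the quotient (e.g.\ take $G=K_n$ presented via a ``sliding-window'' tree-decomposition). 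So one must coarsen: merge the $P_{t,i}$ into blocks along $T$ so that (i) each part of $\scr{P}$ is a union of at most roughly $k$ preliminary parts forming a ``chain'' in $G$ (consecutive ones meeting a common set $S_{t,i}$), whence its weak diameter is at most $2(k+2)\ell$ after tracking the bounds, and (ii) the merging is coarse enough that the spurious cliques collapse and $\operatorname{tw}(H)\leq k$.

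With $\scr{P}$ in hand, three things remain. (1) The weak-diameter bound on parts, as above: cross the chain of at most $k$ preliminary parts, paying $\leq\ell$ inside each and $\leq\ell$ to step between consecutive ones, which keeps the total below $2(k+2)\ell$. (2) $\operatorname{tw}(H)\leq k$: build a tree-decomposition of $H$ over $T$ (or over a suitable minor of $T$), whose bag at $t$ is the set of parts of $\scr{P}$ meeting $J_t$, which should be at most $k+1$ of them if the blocks are designed correctly; the subtree condition then follows from the tree-decomposition axioms for $(J_t)$ together with the block structure, and edge-coverage holds because every edge of $G$ lies inside some $J_t$. (3) $\phi$ is a $2(k+2)\ell$-quasi-isometry: $\dist_H(\phi(u),\phi(v))\leq\dist_G(u,v)$ since a shortest $u$--$v$ path in $G$ projects to a walk between parts; conversely, a shortest path $\phi(u)=Q_0\cdots Q_m=\phi(v)$ in $H$ lifts to a $u$--$v$ walk in $G$ by picking a $G$-edge between each consecutive pair $Q_jQ_{j+1}$ and crossing each $Q_j$ using the weak-diameter bound, giving $\dist_G(u,v)\leq 2(k+2)\ell\cdot\dist_H(\phi(u),\phi(v))+2(k+2)\ell$; and quasi-surjectivity is free since $\phi$ is onto $V(H)$.

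The main obstacle is exactly the coarsening step: the blocks must be coarse enough to force $\operatorname{tw}(H)\leq k$ yet fine enough that each part keeps weak diameter $O(k\ell)$, and these requirements pull against each other. Coarsen too little and the hidden cliques survive; coarsen too much---for instance contracting every maximal pairwise-intersecting family of sets---and the weak diameter blows up (a long path with a width-$\ell$ sliding-window tree-decomposition collapses to a single part of unbounded weak diameter). Choosing blocks that are compatible with both the tree metric of $T$ and the graph metric of $G$, and then re-establishing the treewidth bound (i.e.\ that only $O(k)$ parts meet each $J_t$) for the coarsened quotient, is the technical heart of the argument.
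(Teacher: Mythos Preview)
First, note that this paper does \emph{not} prove \cref{sufficientCondition}: it is quoted from \citet{Nguyen2025}, with only a remark that the explicit constant $2(k+2)\ell$ can be read off from their argument. So there is no ``paper's own proof'' to compare against here; the paper's technical work is entirely devoted to the complementary direction (\cref{twMain}, \cref{compressingFuncExists}).

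As for your proposal itself, there is a genuine gap. You correctly isolate the difficulty---the naive ``topmost'' partition can produce unbounded cliques in the quotient, so some coarsening is needed---but you then stop precisely where the work begins. You do not give any construction of the blocks, any rule for when to merge preliminary parts, or any argument that such a merging can simultaneously keep each block a chain of at most $O(k)$ preliminary parts \emph{and} force at most $k+1$ blocks to meet each $J_t$. You yourself flag that these two requirements pull against each other and call this ``the technical heart of the argument''; that heart is missing. What you have is a proof strategy with its central lemma left as an exercise.

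It is also worth observing that your strategy, if it worked, would prove something strictly stronger than \cref{sufficientCondition}: it would produce, directly from the coarse tree-decomposition hypothesis, a proper partition of $G$ into parts of weak diameter $O(k\ell)$ whose quotient has treewidth at most $k$. That is essentially the conjunction of \cref{sufficientCondition} and \cref{twMain}. The present paper explicitly says that these two characterisations ``are complementary, and neither immediately implies the other'', and its proof of \cref{twMain} needs the full machinery of compressing partitions (\cref{compressingFuncExists}, \cref{changeProviders}) even starting from an honest width-$k$ tree-decomposition of $H$, not merely a coarse one of $G$. So the coarsening step you are deferring is not a routine bookkeeping matter; it is at least as hard as the main theorem of this paper, and plausibly harder.
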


\citet{Hickingbotham2025} independently showed a similar result but with a weaker bound on the treewidth ($2k-1$), and a different coarseness.

The coarseness bound in \cref{sufficientCondition} is not directly stated by \citet{Nguyen2025}, but it can be deduced from their proof (specifically, using \citep[(2.3)]{Nguyen2025} and the observation in the paragraph preceding it with $r:=\ell$).

In \cref{sufficientCondition}, the bags are a union of $k$ sets, but the quasi-isometry is to a graph of treewidth $k$. So the bags contain $k+1$ vertices. This increase from $k$ to $k+1$ is annoying, but necessary for all $k\in \RanIn{N}{1}$, as noted by \citet{Nguyen2025}. However, \citet{Nguyen2025} did show that by `tweaking' the definition of treewidth slightly (obtaining a new parameter `pseudo-treewidth'), one can obtain an exact equality of the constants (`pseudo-treewidth' $k-1$).

Up to a change in constants, the reverse direction of \cref{sufficientCondition} also holds. Specifically, it is easily shown (and was shown by both \citet{Nguyen2025} and \citet{Hickingbotham2025}, albeit with slightly different bounds on the weak diameter) that if a graph is $c$-quasi-isometric to a graph of treewidth at most $k$, then it admits a tree-decomposition where each bag is the union of at most $k+1$ sets of weak diameter at most $2c^2+c$. (To get the exact bound on the diameter, follow the proof of \citep[Lemma 6]{Hickingbotham2025}, but use the precise bound $\dist_H(\phi(u),\phi(v))\leq c\dist_G(u,v)+c$ rather than the cruder bound $\dist_H(\phi(u),\phi(v))\leq c\dist_G(u,v)+c^2$.)

As such, \cref{sufficientCondition} provides a (loose) characterisation of graphs quasi-isometric to graphs of bounded treewidth. The ease at which the reverse direction is obtained means that the primary contribution of this result is the ability to construct a quasi-isometry to a graph of bounded treewidth (by finding such a tree-decomposition).

In this paper, we present a new characterisation of graphs quasi-isometric to graphs of bounded treewidth. In contrast to \cref{sufficientCondition}, the primary contribution of our characterisation is a structural description of graphs that are known to admit such a quasi-isometry. As such, our result is complementary to \cref{sufficientCondition}. Further, neither our result nor \cref{sufficientCondition} easily imply the other.

Our result is described using graph partitions. A \defn{partition} $\scr{P}$ of a graph $G$ is a collection of disjoint subsets of $V(G)$ whose union is $V(G)$. The elements of $\scr{P}$ are called \defn{parts}. For technical reasons, we allow empty parts. We also allow duplicate parts, so $\scr{P}$ is not a set. However, we remark that the only duplicate parts that can exist are empty parts. We say that a partition is \defn{proper} if each part is nonempty. In this case, $\scr{P}$ is a set.

The \defn{quotient} of $G$ by $\scr{P}$, denoted \defn{$G/\scr{P}$}, is the simple graph with vertex set $\scr{P}$ (allowing for multiple vertices to have the duplicate label $\emptyset$), where distinct parts $P_1,P_2\in \scr{P}$ are adjacent if and only if there exists $uv\in E(G)$ with $u\in P_1$ and $v\in P_2$. $\scr{P}$ is \defn{connected} if each $P\in \scr{P}$ induces a connected subgraph of $G$. If $\scr{P}$ is connected, then $G/\scr{P}$ is a minor of $G$. However, if $\scr{P}$ is not connected, then $G/\scr{P}$ need not be a minor of $G$.

It is easy to show that for any $d\in \RanIn{R}{0}$ and any proper partition $\scr{P}$ of a graph $G$ into parts that have weak diameter in $G$ at most $d$, $G$ is $(d+1)$-quasi-isometric to $G/\scr{P}$. (The `proper' condition is needed for quasi-surjectivity.) Indeed, this was already observed by \citet{Hickingbotham2025}, albeit without proof. (For a proof see, for example, \citet[Lemma ~3.7]{Distel2025Cw} or \citet[Lemma~2.2]{Albrechtsen2025K2t}.) However, it is not obvious whether, given a quasi-isometry to a graph $H$, one can construct such a partition whose quotient is (isomorphic to) $H$, or even a subgraph or minor of $H$.

If we relax our goal to instead say that we want the quotient to have the same `structural properties' as $H$ (that is, belong to some structurally defined class that contains $H$), then we can give an affirmative answer for bounded treewidth.

\begin{theorem}
    \label{twMain}
    Let $c\in \RanIn{R}{1}$ and $k\in \ds{N}$, and let $G$ be a graph 
    that is $c$-quasi-isometric to a graph $H$ of treewidth at most $k$. Then $G$ admits a proper partition $\scr{P}$ whose quotient has treewidth at most $k$ and whose parts have weak-diameter in $G$ at most $4(k+1)c^2+c$.
\end{theorem}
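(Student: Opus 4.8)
The plan is to deduce the statement from a purely structural fact about $H$ and then transfer it along the quasi-isometry. Fix a quasi-isometry $\phi\colon V(G)\to V(H)$ witnessing the hypothesis, and fix a tree-decomposition of $H$ of width at most $k$. The crux of the argument is the following claim about $H$ alone: for every $r\in\RanIn{R}{1}$ there is a partition $\scr{Q}$ of $V(H)$ into \emph{connected} parts, each of weak diameter at most $2(k+1)r$ in $H$, such that any two parts that are non-adjacent in $H/\scr{Q}$ lie at distance more than $r$ in $H$; equivalently, whenever $\dist_H(x,y)\le r$ the parts of $\scr{Q}$ containing $x$ and $y$ are equal or adjacent. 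I would apply this with $r:=2c$. So the strategy splits into a routine transfer step and a genuinely delicate construction step.

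Granting the claim, let $\scr{P}:=\{\phi^{-1}(Q):Q\in\scr{Q},\ \phi^{-1}(Q)\neq\emptyset\}$. Since $\scr{Q}$ covers $V(H)\supseteq\phi(V(G))$ and its parts are pairwise disjoint, $\scr{P}$ is a proper partition of $V(G)$. If $u,v\in\phi^{-1}(Q)$ for some $Q\in\scr{Q}$, then $\dist_H(\phi(u),\phi(v))\le 2(k+1)(2c)=4(k+1)c$, so $\dist_G(u,v)\le c\dist_H(\phi(u),\phi(v))+c\le 4(k+1)c^2+c$; hence every part of $\scr{P}$ has weak diameter at most $4(k+1)c^2+c$ in $G$. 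Finally, the assignment $\phi^{-1}(Q)\mapsto Q$ is an injection from $V(G/\scr{P})$ to $V(H/\scr{Q})$ (distinct nonempty preimages of distinct parts are distinct), and it is a graph homomorphism: if $uv\in E(G)$ with $u\in\phi^{-1}(Q)$, $v\in\phi^{-1}(Q')$ and $Q\neq Q'$, then $\dist_H(\phi(u),\phi(v))\le c\cdot 1+c=2c$, so $Q$ and $Q'$ are adjacent in $H/\scr{Q}$ by the claim. Thus $G/\scr{P}$ is isomorphic to a subgraph of $H/\scr{Q}$. As $\scr{Q}$ is a connected partition, $H/\scr{Q}$ is a minor of $H$ and so has treewidth at most $k$; therefore so does $G/\scr{P}$. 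These verifications are all routine.

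The real work is establishing the claim, and this is where I expect the main difficulty. The two requirements pull against each other: bounded weak diameter in $H$ is a metric condition, controlled by balls, whereas having a clean quotient together with the ``$r$-fatness'' — that non-adjacent parts be far apart — is a structural condition that does not follow from the metric alone. A naive Voronoi or ball partition can have non-adjacent parts at distance $2$ (a short shortest path ``clips the corner'' of an intermediate part), while a naive BFS-layer partition can have parts of unbounded weak diameter; and connectivity of parts is essential, since an arbitrary partition of even a treewidth-$1$ graph can have a quotient of unbounded treewidth. The treewidth of $H$ must be used to reconcile these, and this is the source of the factor $k+1$: roughly, one performs a recursive ball-carving guided by a width-$\le k$ tree-decomposition of $H$ (equivalently, one leverages the existence of strong-diameter sparse/padded partitions of treewidth-$k$ graphs with parameter $O(k)$), so that at each level of the recursion only $k+1$ ``interface'' vertices need to be separated and the diameter cost per level is a bounded multiple of $r$. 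Keeping the parts connected — so that $H/\scr{Q}$ is a minor of $H$ — while simultaneously maintaining both the diameter bound $2(k+1)r$ and the $r$-fatness is the delicate point, and I would expect to spend the bulk of the effort there, working with the tree-decomposition of $H$ rather than with its metric alone throughout that construction.
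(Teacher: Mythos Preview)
Your reduction step is fine, but the structural claim you intend to prove about $H$ is false, and the paper explicitly records this obstruction. Concretely, you assert that for every $r\geq 1$ a graph $H$ of treewidth at most $k$ admits a \emph{connected} partition $\scr{Q}$ of weak diameter at most $2(k+1)r$ such that any two parts at distance at most $r$ in $H$ are adjacent in $H/\scr{Q}$; this is exactly the statement that $\scr{Q}$ is $r$-compressing as a $(H/\scr{Q})$-indexed partition. Since $r=2c\geq 2$, such a partition would in particular be $2$-compressing. But \cref{connectedCounterexample} shows that the binary tree of height $d+1$ (treewidth $1$) admits no connected $2$-compressing partition with parts of weak diameter at most $d$, for any $d$. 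Taking $d\geq 4r=2(k+1)r$ with $k=1$ defeats your claim. The failure is exactly the ``corner-clipping'' phenomenon you flagged: at any leaf of the part containing the root, the two child subtrees land in distinct connected parts that are non-adjacent in the quotient yet at distance $2$ in $H$.

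The paper's route is therefore genuinely different from yours at the structural step. It abandons connectedness of the parts of $H$ and, with it, the hope that the target graph be a minor of $H$. Instead, \cref{compressingFuncExists} (proved via \cref{compressingExists}) constructs, for each $\ell$, a graph $A$ of treewidth at most $k$ together with an $\ell$-compressing $A$-indexed partition of $H$ whose parts have weak diameter at most $2(k+1)\ell$; the tree-decomposition of $A$ is built directly on the same index tree $T$ as that of $H$, by an inductive choice of ``centres'' with a priority/penalty scheme that controls how coverages grow along $T$. Your transfer step (pull back via $\phi$, drop empty parts, bound diameters and check edges) is essentially the content of \cref{compFuncToPart}, so that part of your plan matches the paper; the divergence is entirely in how the partition of $H$ is produced and how treewidth of the quotient is certified.
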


The partition $\scr{P}$ is not necessarily connected, so $G/\scr{P}$ may not be a minor of $G$.

Since a proper partition into bounded weak diameter parts gives rise to a quasi-isometry, \cref{pwMain} gives an characterisation of graphs quasi-isometric to bounded treewidth graphs. Unlike \cref{sufficientCondition}, our characterisation gives the same treewidth, although the coarseness is still loose.

The tree indexing the tree-decomposition of $G/\scr{P}$ is the same tree that indexes the tree-decomposition of $H$. So we obtain an analogous result for graphs quasi-isometric to graphs of bounded pathwidth. (A \defn{path-decomposition} is a $P$-decomposition for some path $P$, and the \defn{pathwidth} is the minimum width of a path-decomposition.)

\begin{theorem}
    \label{pwMain}
    Let $c\in \RanIn{R}{1}$ and $k\in \ds{N}$, and let $G$ be a graph 
    that is $c$-quasi-isometric to a graph $H$ of pathwidth at most $k$. Then $G$ admits a proper partition $\scr{P}$ whose quotient has pathwidth at most $k$ and whose parts have weak-diameter in $G$ at most $4(k+1)c^2+c$.
\end{theorem}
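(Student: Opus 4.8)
The plan is to deduce \cref{pwMain} directly from the proof of \cref{twMain}, rather than to reprove anything. The key observation is that the argument for \cref{twMain} should not care what the decomposition tree of $H$ looks like: given a $c$-quasi-isometry $\phi\colon V(G)\to V(H)$ and \emph{any} tree-decomposition $(J_t:t\in V(T))$ of $H$ of width at most $k$, it should produce a proper partition $\scr{P}$ of $G$ with parts of weak diameter at most $4(k+1)c^2+c$ in $G$, \emph{together with} a $T$-decomposition of $G/\scr{P}$ of width at most $k$ indexed by the very same tree $T$. Granting this, \cref{pwMain} is immediate: if the hypothesis $\mathrm{tw}(H)\le k$ is strengthened to $\mathrm{pw}(H)\le k$, then by definition $H$ has a $P$-decomposition of width at most $k$ for some path $P$; feeding this into the construction yields a proper partition $\scr{P}$ of $G$ with the stated weak-diameter bound and a $P$-decomposition of $G/\scr{P}$ of width at most $k$, so $\mathrm{pw}(G/\scr{P})\le k$. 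Combined with the easy converse (a proper partition of a graph into parts of weak diameter at most $d$ gives a $(d+1)$-quasi-isometry to its quotient), this yields the promised characterisation of graphs quasi-isometric to graphs of bounded pathwidth.

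So the real content is entirely in \cref{twMain}, and the natural strategy there is the following. Fix a $c$-quasi-isometry $\phi\colon V(G)\to V(H)$ and a width-$\le k$ tree-decomposition $(J_t:t\in V(T))$ of $H$, build a partition $\scr{Q}$ of $V(H)$, and set $\scr{P}:=\{\phi^{-1}(Q):Q\in\scr{Q}\}$ (discarding empty parts to ensure properness; this is harmless since deleting isolated vertices from a quotient cannot increase its treewidth or pathwidth). For the weak-diameter bound it suffices that every $Q\in\scr{Q}$ has diameter at most $4(k+1)c$ in $H$, since then any $u,v\in\phi^{-1}(Q)$ satisfy $\dist_G(u,v)\le c\,\dist_H(\phi(u),\phi(v))+c\le 4(k+1)c^2+c$. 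For the width bound one wants $\scr{Q}$ chosen so that $G/\scr{P}$ — whose edges join $\phi^{-1}(Q_1)$ and $\phi^{-1}(Q_2)$ exactly when some edge of $G$ has one end in each, which forces $\dist_H(Q_1,Q_2)\le 2c$ — has a $T$-decomposition of width at most $k$. The clean way to obtain this is to take each $Q$ connected in $H$ and to arrange the partition $\scr{Q}$, possibly together with a mild re-bucketing of the bags $(J_t)$ along the \emph{same} tree $T$, so that $B_t:=\{Q\in\scr{Q}:Q\cap J_t\ne\emptyset\}$ works as the bags: the width bound $|B_t|\le|J_t|\le k+1$ (disjointness of the $Q$'s) and the subtree condition (a union of subtrees of $T$ along a connected subgraph of $H$ is a subtree) are then automatic, so the only substantive point is that any two pieces of $\scr{Q}$ within $H$-distance $2c$ meet a common bag.

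That last point is where I expect the difficulty to lie, and it is what forces the factor $k+1$ in the diameter bound: to guarantee that $2c$-close pieces share a bag one must cluster $V(H)$ at a scale comparable to $c$ \emph{while respecting the width of $T$} — for instance by clustering bag by bag along $T$, peeling off balls of radius $\Theta(c)$, where up to $k+1$ nested such balls can contribute to a single cluster, giving clusters of diameter $\Theta((k+1)c)$ rather than $\Theta(c)$. Crucially, such a procedure is driven entirely by $T$ and does not change its shape — it only re-distributes vertices of $H$ among the nodes of $T$ — which is precisely why the output decomposition of $G/\scr{P}$ is again indexed by $T$, and hence why specialising $T$ to a path proves \cref{pwMain} with no extra work. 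The remaining verifications (properness and quasi-surjectivity, the exact constant $4(k+1)c^2+c$, and the bookkeeping around empty and duplicate parts) should be routine once the clustering lemma for $H$ is in hand.
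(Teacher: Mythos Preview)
Your top-level reduction --- that \cref{pwMain} follows from \cref{twMain} once one knows the proof of \cref{twMain} outputs a $T$-decomposition of the quotient indexed by the \emph{same} tree $T$ --- is correct and is exactly what the paper does. The paper packages this as \cref{compressingFuncExists} (proved via \cref{compressingExists}, which explicitly returns a $T$-decomposition for the input tree $T$) together with \cref{compFuncToPart}; specialising $T$ to a path gives \cref{pwMain} with no extra work, just as you say.

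Where your proposal runs into a genuine gap is in the sketched mechanism for \cref{twMain} that you use to justify why the construction should preserve $T$. You propose to take each part $Q\in\scr{Q}$ \emph{connected} in $H$ and to set $B_t:=\{Q:Q\cap J_t\neq\emptyset\}$, so that the subtree condition is automatic and only the ``close parts share a bag'' condition remains. But for a tree $H$ with its standard width-$1$ decomposition (bags are edges), two parts sharing a bag is the same as being adjacent in $H/\scr{Q}$, so your remaining condition is exactly ``$\scr{Q}$ is $2c$-compressing as an $(H/\scr{Q})$-partition''. The paper shows in \cref{connectedCounterexample} that large binary trees admit no connected $2$-compressing partition of bounded weak diameter, so this route is blocked already for $k=1$ and $c=1$; ``mild re-bucketing'' of the bags does not help, since for trees any width-$1$ decomposition forces the same equivalence. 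The paper's actual construction (\cref{compressingExists}, driven by \cref{changeProviders}) therefore abandons connectedness: parts are indexed by \emph{centres} $x\in V(H)$ (which need not lie in their own part), the bags $B_t$ are sets of centres built inductively along $T$ via a penalty/priority scheme rather than by intersection with $J_t$, and the target graph is not $H/\scr{Q}$ but a new graph whose edges are dictated by co-membership in some $B_t$; the subtree property for $(B_t)$ is then proved directly. So your deduction of \cref{pwMain} from the $T$-preserving property is sound, but the specific proof of that property you outline would not go through.
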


We remark that \citet{Nguyen2025} showed an analogous version of \cref{sufficientCondition} for pathwidth (replacing `tree-decomposition' with `path-decomposition' and `treewidth' with `pathwidth'). As such, we have the same pair of complementary characterisations for graphs quasi-isometric to graphs of bounded pathwidth.

\cref{twMain,pwMain} are simple consequences of a new structural result about graphs of bounded treewidth/pathwidth (\cref{compressingFuncExists}), which may be of independent interest. \cref{SecCompressing} explains the idea and motivation behind this new result.

\section{Preliminaries}

Let \defn{$\ds{N}=\{0,1,\dots\}$}. For $c\in \ds{R}$, we define \defn{$\RanIn{N}{c}$}$:=\ds{N}\cap [c,\infty)$ and \defn{$\RanIn{R}{c}$}$:=\ds{R}\cap [c,\infty)$. 

For $\ell\in \ds{R}$, the \defn{$\ell$-neighbourhood} of $S\subseteq V(G)$ in a graph $G$ is the set of vertices at distance at most $\ell$ from $S$ (so some $s\in S$). Note that $N_G^{\ell}(S)$ is empty if $\ell<0$, and that $S\subseteq N_G^{\ell}(S)$ if $\ell\geq 0$. If $S=\{v\}$ for some $v\in V(G)$, we instead write \defn{$N_G^{\ell}(v)$}. The \defn{$\ell$-th} power of $G$, denoted \defn{$G^{\ell}$}, is the graph with vertex set $V(G)$ and edges between distinct $u,v\in V(G)=V(G^{\ell})$ whenever $\dist_G(u,v)\leq \ell$.

For a graph $G$, a set $S\subseteq V(G)$ is a \defn{separator} between $A,B\subseteq V(G)$ if every path in $G$ from $A$ to $B$ intersects $S$. In this case, we say that $S$ \defn{separates} $A$ from $B$ in $G$. Note that for each $a\in A$ and $b\in B$, there exists $s\in S$ such that $\dist_G(a,b)=\dist_G(a,s)+\dist_G(b,s)$.

A \defn{class} of graphs is a collection of graphs closed under isomorphism. A class is \defn{hereditary} if any induced subgraph of a graph in the class is also in the class.

Given graphs $G,H$, we say that an \defn{$H$-indexed partition} of $G$ is a pair $(\scr{P},\psi)$, where $\scr{P}$ is a partition of $G$ and $\psi$ is a bijection from $V(H)$ to $\scr{P}$. We usually leave this bijection implicit, and write the partition $\scr{P}$ as $(P_h:h\in V(H))$ (where $P_h:=\psi(h)$ for each $h\in V(H)$). We call $P_x$, $x\in V(H)$, the \defn{parts} of $(P_h:h\in V(H))$, and we say that $(P_h:h\in V(H))$ is \defn{proper} if $\scr{P}$ is proper (equivalently, each part is nonempty). Say that $(P_h:h\in V(H))$ is an \defn{$H$-partition} if distinct $h,h'\in V(H)$ are adjacent in $H$ whenever there exists $uv\in E(G)$ with $u\in P_h$ and $v\in P_{h'}$, then $h$ and $h'$ are adjacent in $H$. In this case, if $(P_h:h\in V(H))$ is proper, then $\psi$ is an isomorphism between a spanning subgraph of $H$ and $G/\scr{P}$.

We think of a partition $\scr{P}$ of a graph $G$ as being $G/\scr{P}$-indexed, using the identity as the bijection. So $\scr{P}$ is a $G/\scr{P}$-partition.

\section{Compressing Partitions}
\label{SecCompressing}

The launching point for our main result is the following proposition, which we do not use directly, but include for intuition.

\begin{proposition}
    \label{QIHPower}
    Let $c\in \RanIn{R}{1}$, and let $G,H$ be graphs such that $G$ is $c$-quasi-isometric to $H$. Then $G$ admits an $H^{2c}$-partition such that each part has weak diameter in $G$ at most $c$.
\end{proposition}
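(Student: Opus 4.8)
The plan is to use the most naive construction: pull back the partition of $V(H)$ into singletons along the quasi-isometry. Fix a $c$-quasi-isometry $\phi\colon V(G)\to V(H)$, and for each $h\in V(H)$ set $P_h:=\{v\in V(G):\phi(v)=h\}=\phi^{-1}(h)$. Since $\phi$ is a function, the family $(P_h:h\in V(H))$ consists of pairwise disjoint subsets of $V(G)$ whose union is $V(G)$, so it is a partition of $G$; note $V(H^{2c})=V(H)$ by definition of the power, so indexing this partition by $V(H^{2c})$ makes sense. If $\phi$ is not surjective some parts will be empty, but that is permitted by our definition of an $H$-indexed partition, and crucially means we will not need the quasi-surjectivity clause of $\phi$ at all.

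It then remains to verify the two asserted properties, both of which fall straight out of the distance inequalities defining a $c$-quasi-isometry. For the weak diameter bound: if $u,v\in P_h$ then $\phi(u)=\phi(v)=h$, so $\dist_H(\phi(u),\phi(v))=0$, and hence $\dist_G(u,v)\leq c\dist_H(\phi(u),\phi(v))+c=c$; thus each part has weak diameter in $G$ at most $c$ (this also covers $u=v$ trivially). For the $H^{2c}$-partition property: suppose $uv\in E(G)$ with $u\in P_h$, $v\in P_{h'}$ and $h\neq h'$. Then $\dist_G(u,v)=1$, so $\dist_H(h,h')=\dist_H(\phi(u),\phi(v))\leq c\dist_G(u,v)+c=2c$, and since $h\neq h'$ and $2c\geq 1$ this means $hh'\in E(H^{2c})$. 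Hence whenever an edge of $G$ runs between two distinct parts, the corresponding vertices are adjacent in $H^{2c}$, which is exactly the defining condition of an $H^{2c}$-partition.

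There is essentially no obstacle here; the proposition is a direct unpacking of definitions, and the only point worth flagging is the one already noted — that we are deliberately not asking $\scr{P}$ to be proper, so quasi-surjectivity plays no role. The interest of the statement is conceptual: it isolates the "easy" part of a quasi-isometry (the pullback partition has small weak-diameter parts and its quotient embeds in a bounded power of $H$), and the real work in the main theorems will be to replace the crude bound "quotient is contained in $H^{2c}$" by the sharp structural conclusion that the quotient can be taken to actually have treewidth at most $k$, rather than merely treewidth bounded in terms of $k$ and $c$ (which is all $H^{2c}$ gives). That is what \cref{compressingFuncExists} is designed to provide.
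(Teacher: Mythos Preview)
Your proof is correct and is essentially identical to the paper's own proof: both take $P_h:=\phi^{-1}(h)$ for a fixed $c$-quasi-isometry $\phi$, then verify the $H^{2c}$-partition condition from $\dist_H(\phi(u),\phi(v))\leq c\cdot 1+c=2c$ on an edge $uv$, and the weak-diameter bound from $\dist_G(u,v)\leq c\cdot 0+c=c$ when $\phi(u)=\phi(v)$. The only cosmetic difference is the order in which the two checks are presented.
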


\begin{proof}
    Let $\phi$ be a $c$-quasi-isometry from $G$ to $H$. For each $h\in V(H)$, let $P_h:=\{\phi^{-1}(h)\}$. Observe that $(P_h:h\in V(H))=(P_h:h\in V(H^{2c}))$ is an $H^{2c}$-indexed partition of $G$.

    Let distinct $h,h'\in V(H)=V(H^{2c})$ be such that there exists $uv\in E(G)$ with $u\in P_h$ and $v\in P_{h'}$. So $h=\phi(u)$ and $h'=\phi(v)$. Observe that $\dist_H(h,h')\leq c\dist_G(u,v)+c=c(1)+c=2c$. Thus, $h$ is adjacent to $h'$ in $H^{2c}$. Hence, $(P_h:h\in V(H^{2c}))$ is an $H^{2c}$-partition of $G$.

    Fix $h\in V(H)$, and consider any pair $u,v\in P_h$. So $\phi(u)=\phi(v)=h$, and thus $\dist_H(\phi(u),\phi(v))=0$. Hence, we have $\dist_G(u,v)\leq c\dist_H(\phi(u),\phi(v))+c=c(0)+c=c$. So $P_h$ has weak diameter in $G$ at most $c$. This completes the proof.
\end{proof}

If graph powers preserved treewidth and pathwidth, then we could apply \cref{QIHPower} and delete vertices corresponding to empty parts to immediately obtain \cref{twMain} and \cref{pwMain}. However, if the degree is unbounded, then this is false. Every star (tree with at most one non-leaf) has treewidth and pathwidth at most $1$, but the second power is a complete graph, whose treewidth grows proportionally to the number of vertices in the clique (and thus the original star). So we need to eliminate the power.

For this, we draw inspiration from the concept of `blocking' partitions, introduced by \citet{Distel2024Powers}. These are connected partitions in which every `long' (at least some fixed constant) path intersects some part at least twice. Such a partition trivially exists (by having one part for each connected component), so they aimed to construct such a partition with bounded-size parts. Their main focus was planar graphs, but more relevantly to this paper, they also briefly considered graphs of bounded treewidth.

A key feature of `blocking' parts is that each long path is compressed into a strictly shorter path in the quotient. More precisely, for a long path $v_0,\dots,v_n$, if $P_0,\dots,P_n$ is the sequence of parts containing $v_0,\dots,v_n$ respectively, then by assumption, $P_i=P_j$ for some pair of distinct $i,j\in \{0,\dots,n\}$. Thus, within $P_0,\dots,P_n$ there is a path from $P_0$ to $P_n$ in $G/\scr{P}$ of length at most $n-1$. Therefore, one can describe the structure of $G^n$ using the structure of $(G/\scr{P})^{n-1}$.

Because the partition is connected, the quotient is also planar/has bounded treewidth, so the argument can be repeated. This finds a connected partition of the quotient of bounded size, which can be thought of as a connected partition of $G$ of bounded size, with which the power can be reduced again. This can repeatedly inductively, until the power $\ell$ reaches the point where a path of length $\ell$ is not considered long.

The final result is a connected partition $\scr{P}$ of bounded size such that $G^n$ can be described using $(G/\scr{P})^{\ell}$, where $\ell<n$ is the fixed length at which a path is no longer long. $G/\scr{P}$ is still planar/has bounded treewidth, so we can summarise this by saying `large powers of planar/bounded treewidth graphs can be described using only small powers of planar/bounded treewidth graphs'.

This is ideal for our purposes, since we want to reduce $H^{2c}$ to a graph $H'$ of bounded treewidth. However, there are a few obstacles. Firstly, these partitions only exist when the degree is bounded. We fix this by relaxing the requirement that the parts have bounded size, instead asking that the parts have bounded weak diameter in $G$. This is natural, since when the degree is bounded, the parts have bounded size.

However, more critically, we cannot reduce beyond the `minimum length' when the path is no longer considered long. For bounded treewidth graphs, the partition obtained by \citet{Distel2024Powers} treats every path of length at least $3$ as long, and hence the power can be reduced to $2$. While this is a huge improvement, it still destroys the bound on the treewidth, as seen in our example with stars. We need to remove the power completely.

To achieve this, we modify the concept of blocking partitions. For us, the key property of `blocking' partitions is that distances in the graph are shrunk in the quotient. We take this as a defining property, and then tweak it by instead asking that the distances decrease in some graph $H$ for which $\scr{P}$ is an $H$-indexed partition. In fact, instead of just decreasing the power by $1$ each step, we can directly demand that two vertices that are `close' (adjacent in the power) belong to adjacent parts. This leads to the following definition.

For a graph $H$ and $\ell\in \RanIn{R}{0}$, say that an $H$-indexed partition $(P_h:h\in V(H))$ of a graph $G$ is \defn{$\ell$-compressing} (in $G$) if for each pair of distinct $h,h'\in V(G)$ with $\dist_G(P_h,P_{h'})\leq \ell$, $h$ is adjacent to $h'$ in $H$. Equivalently, $(P_h:h\in V(H))$ is a $H$-partition of $G^{\ell}$.

Observe that if $\ell\geq 1$, then any $\ell$-compressing partition of a graph is also an $H$-partition.

Note that we have dropped the requirement that the partition is connected. Because we allow $H$ to have more edges than the quotient, $H$ may not a minor of $G$ even if the quotient is. Thus, it is less important that the quotient is a minor of $G$. However, more importantly, we cannot both ask that $H=G/\scr{P}$ (with the identity map from $H$ to $\scr{P}$) and that $\scr{P}$ is connected, as such a partition cannot be $2$-compressing for large binary trees.

\begin{proposition}
    \label{connectedCounterexample}
    For each $d\in \ds{N}$, the binary tree $T$ of height $d+1$ does not admit a connected partition $\scr{P}$ that is $2$-compressing in $G$ (as a $(G/\scr{P})$-indexed partition with the identity map) and such that each $P\in \scr{P}$ has weak diameter at most $d$.
\end{proposition}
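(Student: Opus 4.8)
The plan is to fix a root $r$ of $T$ and argue in terms of the resulting ancestor order, writing $\operatorname{depth}(v)$ for the distance in $T$ from $r$ to $v$. First I would record two easy facts about an arbitrary connected partition $\scr{P}$ of $T$. Since each part $P\in\scr{P}$ induces a subtree, it contains a unique vertex $t_P$ of minimum depth (the \emph{top} of $P$), and $t_P$ is an ancestor of every vertex of $P$ — otherwise the path inside $P$ from $t_P$ to a non-descendant of $t_P$ would reach a vertex of depth less than $\operatorname{depth}(t_P)$. Consequently $\dist_T(t_P,v)=\operatorname{depth}(v)-\operatorname{depth}(t_P)$ for every $v\in P$, so if $P$ has weak diameter at most $d$ in $T$ then every $v\in P$ satisfies $\operatorname{depth}(v)\le\operatorname{depth}(t_P)+d$; I will call this the \emph{depth bound}. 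We may also assume every part is nonempty, since empty parts satisfy all the conditions vacuously.

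Next I would show that the set $M$ of edges of $T$ whose two endpoints lie in different parts is a matching; I expect this to be the main obstacle. Suppose not, and let a vertex $w$ meet two such edges $wu$ and $wv$ with $u\neq v$, and let $P,Q,Q'$ be the parts of $w,u,v$, so that $Q\neq P\neq Q'$. If $Q=Q'$, then $u$ and $v$ are joined both by the path $u,w,v$ and by a path inside the connected part $Q$ (which avoids $w\notin Q$); but $T$ is a tree, so this is impossible. If $Q\neq Q'$, I would first note that $T$ has no edge between $Q$ and $Q'$: such an edge, together with a path inside $Q$, a path inside $Q'$, and the edges $vw$ and $wu$, would form a cycle through the pairwise-disjoint vertex sets $Q$, $Q'$ and $\{w\}$, which $T$ does not contain. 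On the other hand $u,w,v$ is a path of length $2$, so $\dist_T(Q,Q')\le 2$, and since $Q\neq Q'$ the $2$-compressing hypothesis forces $Q$ and $Q'$ to be adjacent in $T/\scr{P}$, i.e.\ forces an edge of $T$ between $Q$ and $Q'$ — contradicting the previous sentence. Hence no vertex meets two edges of $M$, so $M$ is a matching.

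The remainder is bookkeeping with the rooted structure. The edge from any top $t_P\neq r$ to its parent lies in $M$, since that parent has smaller depth than $t_P$ and hence lies outside $P$; therefore two tops that are siblings in $T$ would give two edges of $M$ at their common parent, which is impossible, so no two siblings of $T$ are both tops. Using this, I would prove that the part $P_r$ containing $r$ equals $\{r\}$: its top is $r$, so by the depth bound $P_r$ has no vertex of depth greater than $d$, and then a downward induction on $\ell=d,d-1,\dots,1$ shows $P_r$ has no vertex of depth $\ell$ — any such vertex $x$ has depth at most $d<d+1$, so $x$ is not a leaf and has two children, both at depth $\ell+1$ and hence (by the induction hypothesis, or the depth bound when $\ell=d$) both outside $P_r$, so both are tops, contradicting that no two siblings are both tops. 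Finally, since $P_r=\{r\}$ the two children of $r$ lie outside $P_r$ and are therefore both tops — two sibling tops once more. This contradiction shows that $T$ admits no such partition $\scr{P}$.
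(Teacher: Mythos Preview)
Your proof is correct, but it reaches the contradiction by a considerably longer route than the paper's.

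The heart of both arguments is the same observation: a vertex $w$ cannot have two neighbours lying in parts different from its own, because those two parts would then be distinct (else $T$ has a cycle) and non-adjacent in $T/\scr{P}$ (again, else $T$ has a cycle), yet at distance $2$ in $T$ --- contradicting $2$-compressing. Your ``$M$ is a matching'' lemma is exactly this observation, stated in full generality.

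Where the two proofs diverge is in how this observation is deployed. The paper simply picks a leaf $t$ of the part $P$ containing the root (or $t=r$ if $P=\{r\}$); the diameter bound ensures $t$ is not a leaf of $T$, so $t$ has two children, both outside $P$ since $t$ is a leaf of $P$, and the contradiction is immediate. You instead derive ``no two siblings are both tops'' from the matching lemma, run a downward induction to force $P_r=\{r\}$, and only then find the contradiction at $r$. This detour is unnecessary: once you have the matching lemma, applying it directly at any leaf of $P_r$ (which exists and, by your depth bound with $t_{P_r}=r$, has depth at most $d<d+1$ and hence two children) finishes the proof in one line --- and that is precisely the paper's argument. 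Your structural packaging (tops, the matching $M$) is pleasant and would generalise nicely, but for this proposition it does more work than needed.

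One small point you left implicit: when you say the two children ``are both tops'', you are using that a vertex whose parent lies in a different part must be the top of its own part; this follows from your earlier fact that the top is an ancestor of every vertex of the part together with connectedness, but it is worth stating.
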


\begin{proof}
    Presume otherwise. Let $r$ be the root of $T$. So each leaf is at distance exactly $d+1$ from $r$. Let $P$ be the part containing $r$. Since $\scr{P}$ is connected, $G[P]$ is a subtree of $T$. Either $P=\{r\}$, in which case set $t:=r$, or there exists a leaf $t$ of $P$ other than $r$. Since $P$ has weak diameter at most $d$ in $G$ and since $r\in P$, $t$ is not a leaf of $T$. Thus, $t$ has two children $c_1,c_2$ in $T$, neither of which are in $P$ (by choice of $t$, and since $G[P]$ is connected and contains $r$). Since $\scr{P}$ is connected and since $t\in P$, $c_1,c_2$ are in distinct parts of $\scr{P}$ that are non-adjacent in $G/\scr{P}$. However, $\dist_G(c_1,c_2)=2$, which contradicts the fact that $\scr{P}$ is $2$-compressing in $G$.
\end{proof}

We remark (without proof) that \cref{connectedCounterexample} can be extended to other trees and graphs of bounded treewidth.

\cref{connectedCounterexample} eliminates the main advantage of connected partitions, as we cannot take $H:=G/\scr{P}$ to conclude $H\leq G$. So we find it more natural to drop the connected requirement and make no attempt to relate $G$ and $H$. Instead, we construct a tree-decomposition of $H$ directly. This allows for more flexibility in choice of $H$ and $\scr{P}$, which is important for our proof.

Because we no longer relate the graphs $G$ and $H$, it is natural to tie the existence of compressing partitions to a class of graphs. This leads to the following definition.

Let $\scr{H}$ be a class of graphs. Say that a function $f:\RanIn{R}{0}\mapsto \RanIn{R}{0}$ is a \defn{compressing function} for $\scr{H}$ if for each $\ell\in \RanIn{R}{0}$ and $H\in \scr{H}$, there exists $A\in \scr{H}$ and an $A$-indexed partition of $H$ that is $\ell$-compressing and such that each part has weak diameter in $H$ at most $f(\ell)$.

If a hereditary class $\scr{H}$ admits a compressing function, then being quasi-isometric to some $H\in \scr{H}$ is equivalent to admitting, for some $H'\in \scr{H}$, a proper $H'$-partition into bounded weak diameter parts. (See the introduction for the reverse direction.)

\begin{lemma}
    \label{compFuncToPart}
    Let $\scr{H}$ be a hereditary class of graphs, and let $f:\RanIn{R}{0}\mapsto \RanIn{R}{0}$ be a compressing function for $\scr{H}$. Then for every $c\in \RanIn{R}{1}$ and graph $G$ that is $c$-quasi-isometric to some $H\in \scr{H}$, there exists $A\in \scr{H}$ and a proper $A$-partition of $G$ such that each part has weak diameter at most $cf(2c)+c$ in $G$.
\end{lemma}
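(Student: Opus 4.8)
The plan is to fix a $c$-quasi-isometry $\phi$ from $G$ to $H$, apply the compressing function to $H$ with parameter $\ell:=2c$, and then pull the resulting partition of $H$ back along $\phi$ to obtain the desired partition of $G$.

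Concretely, first I would invoke the definition of compressing function with $\ell:=2c$ and the given $H\in\scr{H}$ to obtain $A\in\scr{H}$ and an $A$-indexed partition $(Q_a:a\in V(A))$ of $H$ that is $2c$-compressing in $H$ and whose parts have weak diameter at most $f(2c)$ in $H$. Then, for each $a\in V(A)$, set $R_a:=\phi^{-1}(Q_a)=\{v\in V(G):\phi(v)\in Q_a\}$. Since $(Q_a:a\in V(A))$ partitions $V(H)$ and $\phi$ is a function on $V(G)$, the family $(R_a:a\in V(A))$ is an $A$-indexed partition of $G$.

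Next I would check that $(R_a:a\in V(A))$ is in fact an $A$-partition and satisfies the weak-diameter bound. For the first, suppose $uv\in E(G)$ with $u\in R_a$, $v\in R_{a'}$ and $a\neq a'$; in particular $\phi(u)\neq\phi(v)$. Then $\dist_H(\phi(u),\phi(v))\leq c\dist_G(u,v)+c=2c$, and since $\phi(u)\in Q_a$ and $\phi(v)\in Q_{a'}$ this gives $\dist_H(Q_a,Q_{a'})\leq 2c$; as $(Q_a:a\in V(A))$ is $2c$-compressing in $H$, $a$ and $a'$ are adjacent in $A$. For the second, take distinct $u,v\in R_a$; then $\phi(u),\phi(v)\in Q_a$, so $\dist_H(\phi(u),\phi(v))\leq f(2c)$, whence $\dist_G(u,v)\leq c\dist_H(\phi(u),\phi(v))+c\leq cf(2c)+c$.

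Finally, to obtain a \emph{proper} partition I would discard the empty parts: let $A':=A[\{a\in V(A):R_a\neq\emptyset\}]$, which lies in $\scr{H}$ because $\scr{H}$ is hereditary, and observe that $(R_a:a\in V(A'))$ is a proper $A'$-partition of $G$ with the same weak-diameter bound (it is still a partition since only empty parts were removed, and an edge between two distinct parts still joins adjacent vertices of $A'$ because $A'=A[V(A')]$). There is no real obstacle here beyond bookkeeping; the two points requiring a little care are the use of hereditariness to handle empty parts, and noting in the edge check that $u\in R_a$, $v\in R_{a'}$, $a\neq a'$ forces $\phi(u)\neq\phi(v)$, so that the bound $\dist_H(\phi(u),\phi(v))\leq 2c$ can legitimately be fed into the $2c$-compressing hypothesis. (One could equivalently phrase the construction as composing the $H^{2c}$-partition of \cref{QIHPower} with the $A$-partition of $H^{2c}$ coming from the compressing function, but pulling back along $\phi$ directly keeps the weak-diameter computation transparent.)
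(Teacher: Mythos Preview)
Your proposal is correct and follows essentially the same approach as the paper: fix a $c$-quasi-isometry $\phi$, apply the compressing function to $H$ with $\ell=2c$, pull back the resulting partition via $\phi^{-1}$, verify the $A$-partition and weak-diameter conditions exactly as you do, and then pass to the induced subgraph on the non-empty parts using hereditariness. The only minor difference is cosmetic ordering (the paper restricts to non-empty parts immediately rather than at the end), and your remark that $a\neq a'$ forces $\phi(u)\neq\phi(v)$ is true but not actually needed, since the $2c$-compressing hypothesis only requires $a\neq a'$.
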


\begin{proof}
    Let $\phi$ be a $c$-quasi-isometry from $G$ to $H$. Since $f$ is a compressing function for $\scr{H}$, there exists a graph $A'\in \scr{H}$ and an $A'$-indexed partition $(P_a:a\in V(A'))$ of $H$ that is $2c$-compressing and such that each part has weak diameter at most $f(2c)$ in $H$.

    For each $a\in V(A')$, let $P_a':=\phi^{-1}(P_a)$. Let $A$ be the subgraph of $A'$ induced by the vertices $a\in V(A)$ such that $P_a'\neq \emptyset$. Observe that $(P_a':a\in V(A))$ is a proper $A$-indexed partition of $G$. Since $\scr{H}$ is hereditary and $A'\in \scr{H}$, we have $A\in \scr{H}$.

    Let $a,a'\in V(A)$ be distinct such that there exists $uv\in E(G)$ with $u\in P_a'$ and $v\in P_{a'}'$. So $\phi(u)\in P_a$ and $\phi(v)\in P_{a'}$. Observe that $\dist_H(\phi(u),\phi(v))\leq c\dist_G(u,v)+c=c(1)+c=2c$. Thus, $\dist_H(P_a,P_{a'})\leq 2c$. Since $(P_a:a\in V(A'))$ is a $2c$-compressing partition of $H$, we have that $a,a'$ are adjacent in $A'$ and thus $A$. It follows that $(P_a':a\in V(A))$ is an $A$-partition of $G$.

    Fix $a\in V(A)$, and consider any pair $u,v\in P_a'$. So $\phi(u),\phi(v)\in P_a$. As $P_a$ has weak diameter at most $f(2c)$ in $H$, we have $\dist_G(u,v)\leq c\dist_H(\phi(u),\phi(v))+c\leq cf(2c)+c$. So $P_a'$ has weak diameter at most $cf(2c)+c$ in $G$. This completes the proof.
\end{proof}

The main technical result of this paper is the following.

\begin{theorem}
    \label{compressingFuncExists}
    For each $k\in \ds{N}$, the function $\ell\mapsto 2(k+1)\ell$ is a compressing function for the class of graphs of treewidth at most $k$ and the class of graphs of pathwidth at most $k$.
\end{theorem}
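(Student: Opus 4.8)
The plan is to prove the result for treewidth (the pathwidth case is identical, just keeping the tree a path throughout). Fix $k \in \ds{N}$, a graph $H$ of treewidth at most $k$, and $\ell \in \RanIn{R}{0}$; I need to build a graph $A$ of treewidth at most $k$ and an $A$-indexed partition of $H$ that is $\ell$-compressing with each part of weak diameter at most $2(k+1)\ell$ in $H$. Start from a tree-decomposition $(J_t : t \in V(T))$ of $H$ of width at most $k$; we may assume it is \emph{smooth} (adjacent bags differ by exactly one vertex, all bags have size exactly $k+1$) by the standard normalisation. Root $T$ at some node $r$. The guiding idea is to partition $H$ by ``sweeping'' down $T$ in blocks of bounded radius: intuitively, group together vertices whose highest bag lies in a common ``horizontal slab'' of $T$ of depth roughly $\ell$, so that two vertices far apart in $T$ land in different parts, while two vertices within distance $\ell$ in $H$ (hence, via the separator structure of the tree-decomposition, within a bounded number of slabs of each other in $T$) land in parts that are forced to be adjacent in the constructed graph $A$.

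Concretely, here are the steps I would carry out. First, assign to each $v \in V(H)$ its \emph{top node} $t(v)$, the node of $T$ closest to the root whose bag contains $v$; since bags containing $v$ form a subtree, $t(v)$ is well-defined. Second, define a function $\lambda : V(T) \to \ds{N}$ measuring (scaled) depth in $T$: roughly $\lambda(t) = \lfloor \dist_T(r,t)/\ell \rfloor$ — but I expect the actual construction needs $\lambda$ to be a $(k+1)$-fold refinement of depth, or a ``staircase'' cut, so that each level-set of $\lambda$ induces subtrees of bounded radius \emph{and} crossing one $\ell$-ball in $H$ changes $\lambda$ by at most one; this is where the factor $k+1$ enters. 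Third, form the parts: for each value $i$ in the range of $\lambda$ and each connected component $C$ of the subforest of $T$ on $\lambda^{-1}(i)$, let $P_{(i,C)} := \{ v \in V(H) : t(v) \in C \}$. Fourth, define $A$ on vertex set $\{(i,C)\}$ by making $(i,C)$ and $(i',C')$ adjacent whenever $|i - i'| \le 1$ and $C, C'$ are ``vertically aligned'' (one is an ancestor-component of the other in $T$); then exhibit a tree-decomposition of $A$ of width at most $k$ by contracting the slabs of $T$ — the bag at the contracted node being the set of slab-components meeting the corresponding original bag, which has size at most $k+1$ because a smooth bag has $k+1$ vertices each contributing one top-node and hence at most $k+1$ distinct components. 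Fifth, verify $\ell$-compressing: if $\dist_H(P_{(i,C)}, P_{(i',C')}) \le \ell$, pick a shortest $H$-path realising this; by the tree-decomposition separator property it stays (in terms of top-nodes of its vertices) within a $T$-subtree of bounded radius, forcing $|i-i'|\le 1$ and vertical alignment, hence adjacency in $A$. Sixth, bound the weak diameter: two vertices of $P_{(i,C)}$ have top-nodes in $C$, a subtree of radius $O(\ell)$ in $T$; walking along a path in $T$ through $C$ and picking up one vertex from each consecutive bag gives an $H$-walk of length at most (number of bags crossed) $\le 2(k+1)\ell$ connecting them — this is where the claimed bound $2(k+1)\ell$ is pinned down.

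The main obstacle I anticipate is the third/fourth step: choosing the depth-function $\lambda$ (equivalently, the horizontal cutting of $T$) so that \emph{simultaneously} (a) each slab-component has small enough radius in $T$ to guarantee weak diameter $\le 2(k+1)\ell$ in $H$, (b) any $H$-path of length $\le \ell$ only crosses between $\lambda$-consecutive slabs, and (c) the quotient graph $A$ retains treewidth $\le k$ rather than merely bounded treewidth. Naively, an $H$-edge can have its two endpoints' top-nodes arbitrarily far apart in $T$ (one endpoint can sit in a bag far below where it was introduced), so the cut must be coarse enough to absorb this; but making it coarse threatens the weak-diameter bound. The resolution I would pursue is to make $\lambda$ count depth in \emph{units of $\ell$ but offset per-branch}, or to first pass to the power $H$ and re-decompose — more precisely, to note that for the separator property what matters is not $\dist_T$ of top-nodes but that a length-$\le\ell$ path in $H$ witnesses, at each of its $\le \ell$ steps, a bag containing that step's edge, and these bags form a connected subtree of $T$ of at most $\ell+1$ nodes; so I would define the slabs with thickness slightly more than $\ell$ and argue a length-$\le\ell$ path lies in at most two consecutive slabs. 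Getting the treewidth of $A$ exactly $\le k$ (step four) then hinges on the smoothness normalisation: each original bag of size $k+1$ maps to at most $k+1$ slab-components, and the subtree-of-$T$ structure of these top-nodes transfers to give a genuine width-$k$ tree-decomposition of $A$. I would expect the bookkeeping here to be the bulk of the proof, with the compressing and diameter verifications (steps five and six) being short once the construction is set up correctly.
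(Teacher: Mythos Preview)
Your slabbing-by-$T$-depth approach has a genuine gap that breaks the $\ell$-compressing property, and no adjustment of slab thickness or per-branch offsets repairs it. Take $H=K_{1,n}$ (centre $c$, leaves $v_1,\dots,v_n$), which has pathwidth $1$; the smooth path-decomposition has $T$ a path $t_1,\dots,t_n$ with bags $J_{t_i}=\{c,v_i\}$. Then $t(c)=t_1$ and $t(v_i)=t_i$, so for any choice of $\lambda$ the leaves $v_1$ and $v_n$ land in slabs whose indices differ by roughly $n$ divided by the slab thickness. Yet $\dist_H(v_1,v_n)=2$, so for $\ell\geq 2$ their parts must be adjacent in $A$---but in your construction adjacency in $A$ only holds between consecutive slabs. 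Your proposed resolution (``a length-$\leq\ell$ path in $H$ witnesses bags forming a connected subtree of $T$ of at most $\ell+1$ nodes'') is exactly what fails: the path $v_1,c,v_n$ uses edges living in bags $t_1$ and $t_n$, and connecting those in $T$ needs all of $T$. The underlying obstruction is that a single vertex (here $c$) can sit in every bag, forcing the top-nodes of its neighbours to spread across all of $T$; partitioning by top-node location therefore cannot respect $H$-distances. (Your weak-diameter argument in step six is also unsound as stated---walking along $T$ and ``picking up one vertex from each bag'' does not produce an $H$-walk, since bag-mates need not be adjacent in $H$---but the star already kills the approach.)

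The paper's proof is entirely different and never partitions by $T$-structure. It walks down $T$ maintaining at each node $t$ a set $B_t\subseteq V(H)$ of at most $k+1$ \emph{centres}, each carrying a ``penalty'' reflecting its relative age; a vertex $v$ is assigned to the highest-priority centre $x$ whose $(|B_t|-\sigma(x))\ell$-ball covers it (for a suitable ancestor $t$ of $r(v)$). Parts are thus contained in $(k+1)\ell$-balls in $H$, giving the weak-diameter bound immediately. The target graph $A$ has the sets $B_t$ as bags of a $T$-decomposition, so its treewidth (pathwidth) is at most $k$. The $\ell$-compressing property is secured by a bookkeeping lemma (\cref{changeProviders}): whenever a centre is dropped between $B_{t'}$ and $B_t$, every vertex it used to cover either was already covered by an older centre, or now lies in the \emph{security} (the inner ball) of a retained centre---so that anything within distance $\ell$ of it is still covered by some centre in the current bag. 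The whole argument is carried out in $H$-distances; your approach tries to substitute $T$-distances, and the star shows these can be arbitrarily decoupled.
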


\cref{twMain,pwMain} are obtained by applying \cref{compressingFuncExists,compFuncToPart} (noting that treewidth does not increase under taking subgraphs).

The remainder of the paper is devoted to proving \cref{compressingFuncExists}.

\section{Proof Explanation}

We now explain the idea behind the proof of \cref{compressingFuncExists}.

Fix $\ell\in \RanIn{R}{0}$, a tree-decomposition $(J_t:t\in V(T))$ of width $k$ of a graph $G$, and a root $r\in V(T)$ for $T$. We want to find a graph $H$, a $T$-decomposition of width $k$ of $H$, and an $\ell$-compressing $H$-indexed partition of $G$ whose parts have weak diameter at most $2(k+1)\ell$ in $G$. If this can always be done, then \cref{compressingFuncExists} quickly follows.

The key idea is relatively simple. Each part is contained within the $(k+1)\ell$-neighbourhood of some vertex, which we will call the \defn{centre} of the part. Critically, the centre of the part may not be in the part. The parts are indexed by these centres, so $V(H)$ is precisely the set of centres. Inductively working along $T$, the proof carefully picks new vertices to act as centres.

For each bag $J_t$ of the tree-decomposition, there is a corresponding set of centres $B_t$ of size at most $k+1$. The $T$-decomposition of $H$ will be $(B_t:t\in V(T))$, and distinct centres are adjacent in $H$ if and only if they are in a common `bag' $B_t$.

The `initial' set of centres $B_r$ is $J_r$. Then for $t\in V(T)\setminus \{r\}$, we inductively select the centres $B_t$ based on the centres $B_{t'}$ for the parent $t'$.

To each centre $x$, we associate not just a part, but also two special `zones', which we call the `security' and `coverage'. The coverage is the $i\ell$-neighbourhood of $x$, for a specially picked $i\in \{1,\dots,k+1\}$. These are the vertices that `could' go in the part $P_x$. However, the coverages are not disjoint, so some vertices in the coverage will be assigned to other parts.

For each $v\in V(G)$, let $r(v)$ be the (unique) vertex of $V(T)$ closest to $r$ such that $v\in J_{r(v)}$. We can create a `priority system' (total ordering) for $V(G)$, so that $v\in V(G)$ has `higher priority' than $u\in V(G)$ if $r(v)$ is a strict ancestor of $r(u)$. We assign each $v\in V(G)$ to the part $P_x$ of the highest priority centre $x$ whose coverage contains $v$ and such that $r(x)$ is an ancestor of $r(v)$ (such a centre will exist). From an inductive point of view, this centre is also the `oldest' centre, in the sense that the algorithm created it before (or at the same time as) any other alternatives. So we `prioritise' putting vertices in the `oldest' parts. Critically, this can (and frequently will) result in $x$ not being assigned to the part $P_x$.

The `security' is the $(i-1)\ell$-neighbourhood of $x$ (where $i$ is the same as for the coverage). So any vertex $v$ in the $\ell$-neighbourhood of the security is in the coverage, and could be assigned to $P_x$. In particular, if $x$ is sufficiently old, $v$ will be assigned to $P_x$.

Our goal is to ensure that whenever we `discard' a centre $x\in B_{t}\setminus B_{t'}$, every vertex $v$ in the coverage is either (a) contained in the coverage of an `older' (higher priority) centre, or (b) is in the security of a vertex in $B_t\cap B_{t'}$. In the former case, $v$ is not in the part $P_x$ (and is thus not relevant), and in the latter case, we find that every vertex in the $\ell$-neighbourhood of $x$ is in the coverage of a vertex in $B_t\cap B_{t'}$, and thus gets assigned to one of these parts by our priority system.

However, there is an obvious problem: what if $v$ is at the limit of the coverages of two (or more) centres, one of which we must discard? In this case, we necessarily have to grow the other coverage so that $v$ becomes `secure' (is contained in the security). If repeated, this can lead to the coverage (and thus the part) having unbounded weak diameter. This issue can be addressed by limiting the diameters of the security/coverage based on the `age' of the centre.

Since each bag $J_t$ is a separator, we need only consider the $k+1$ `closest' centres ($B_t$). Starting with the $\ell$-neighbourhood for the coverage (and $0$-neighbourhood for the security) for the `newest' (lowest priority) centre, for each increase in age (priority), we add an extra $\ell$-neighbourhood to the security and coverage. As we consider at most $k+1$ centres at a time, at most $k$ increases are performed. Thus, the coverage is at most the $(k+1)\ell$-neighbourhood.

We stress that we are now considering securities/coverages to be relative to a vertex $t$ of the tree. A centre will start with only a small coverage and security, but as it gets `older' (we move along the tree to a descendant), it is allowed to grow. This growth need not be consistent either. On one child, the security and coverage may grow, on another they may stay the same, and on yet another the centre may be discarded. However, critically, the security and coverage does not shrink (provided the centre is kept). Once it is grown, that growth is permanent, for as long as that centre is kept.

We also assign each $v\in J_t$ to a centre $x\in B_{t'}$, which we say $v$ `belongs to'. This is different to which part $P_{x'}$ $v$ is assigned to, as we might have $x'\notin B_{t'}$ (either because $x'$ was discarded in a previous step, or because $v$ is too far from any of the existing centres and needs a new centre). To chose $x$, instead of just asking `what is the closest centre' or `what is the oldest centre', we instead ask `what is the closest centre, relative to its age'. Specifically, we minimise the quantity $\dist_G(x,v)+i\ell$, where $i$ is the `penalty' of the centre, starting at $0$ for the oldest (highest priority) centre and increasing by $1$ with each increase in age. In the event of a tie, we break the tie using the priority.

Note that the `penalties' are the reverse order of the relative `priorities'. So for each decrease by $1$ in the penalty, we add an extra $\ell$-neighbourhood to the security and coverage.

The intuition behind the quantity $\dist_G(x,v)+i\ell$ only applies when $v$ is in the coverage of at least one centre in $B_{t'}$ (the other case is less important, as then $v$ is `far enough' away from any existing centres). In this case, we want to pick the centre $x$ whose coverage extends as `far past' $v$ as possible. That is, we maximise the distance to a vertex not in the coverage. This creates a `buffer' of vertices in the coverage that is as `wide' as possible. The centre that maximises the `width' of this buffer is precisely the centre that minimises $\dist_G(x,v)+i\ell$.

When we discard a centre, we decrease the penalties of all centres older than it by $1$, and thus add an extra $\ell$-neighbourhood to their securities and coverages. This means that in our `tied' example before, the vertex $v$ that was at the edge of two coverages is now in the security of the newer centre after the older centre $x$ is discarded. (The newer centre being discarded is not a concern, as we would never put $v$ in $P_{x'}$ over $P_x$.)

Occasionally, we must pick new centres. We do this if the bag contains `unsecure' (not in a security) vertices. These vertices become new centres. From this, we can inductively ensure that each vertex in $J_{t'}$ is secure, so the new centres (the unsecure vertices) come from $J_t\setminus J_{t'}$. Thus, we avoid discarding and then `re-adding' centres, ensuring that $(B_t:t\in V(T))$ is a tree-decomposition of $H$.

The vertices $J_t\cap J_{t'}$ are secure by induction, and `belong' to a centre using the `penalty-adjusted' system above. Each of these centres will be `kept' (be in $B_t\cap B_{t'}$). This means that we need to keep at most $|J_t\cap J_{t'}|$ centres. Since the vertices in $J_t\cap J_{t'}$ are secure, none of them become new centres. So, at worst, we need to add at most $|J_t\setminus J_{t'}|$ centres. Thus, we need at most $|J_t|$ centres for the next step ($B_t$). For technical reasons, we prefer to avoid decreasing the number of centres. So we add extra centres from $B_{t'}$ to $B_t$ until it has at least $|B_{t'}|$ but at most $\max(|J_t|,|B_{t'}|)$ centres. By induction, $B_t$ has at most $k+1$ centres.

There is one remaining concern: a vertex $u$ that is `close' to (within distance $\ell$ of) a vertex $v$ in the coverage of an old centre $x$ that we discard ($B_{t'}\setminus B_t$). In such a case, could we have $u\in P_{x'}$ for a `new' centre $x'$ (in $B_t\setminus B_{t'}$), while also having $v\in P_x$ (which is possible, as $v$ is in the coverage of $x$). Since $x$ and $x'$ will not be adjacent in $H$ (as they are not in a common bag $B_{t^*}$), this would be a problem. However, we show that this does not occur. For this, we exploit the penalty system and the fact $J_t\cap J_{t'}$ is a separator between $J_t$ and $B_{t'}$, as each centre in $B_{t'}$ first appears in a bag $J_{t^*}$ `before' $t'$ ($t^*$ is an ancestor of $t'$).

Since $J_t\cap J_{t'}$ is a separator, there is a vertex $z\in J_t\cap J_{t'}$ on the shortest path from $x$ to $v$. Note that $z$ is also in the coverage of $x$. $z$ `belongs' to a centre $y$ we keep ($y\in B_t\cap B_{t'}$). By choice of how we pick $y$, the `buffer zone' in the coverage of $y$ around $z$ is `wider' than the buffer zone from $x$. This implies that $v$ is also in the coverage of $y$.

If this centre $y$ has higher priority than $x$, then we would never put $v$ in $P_x$. If $y$ has lower priority (and thus higher penalty), then in the next step, as we discarded $x$, the penalty of $y$ will be decreased by at least $1$. Hence, $v$ will be in the security of $y$. Thus, the $\ell$-neighbourhood of $v$, including $u$, is contained in the coverage of $y$.

Our priority system will always prioritise the kept centres over the new centres (by an easy inductive argument). Thus, we would never put $u$ in a part belonging to a new centre (or any centre created thereafter). This means that the concerning scenario above cannot occur. From this, we derive that if $u,v\in V(G)$ are at distance at most $\ell$, then the centres $x,x'\in V(H)$ with $u\in P_x$ and $v\in P_{x'}$ satisfy $u,v\in B_t$ for some $t\in V(T)$ (in particular, $t\in \{r(x),r(x')\}$). Then by definition of $H$, either $u=v$ or $uv\in E(H)$. So the partition is compressing.

The weak diameter of the parts is derived from the fact that the coverage is at most the $(k+1)$-neighbourhood. Recall that we only add a vertex $x$ as a centre in $B_t\setminus B_{t'}$ if it was unsecure, and that once a vertex is secure, it stays secure. Therefore, we never discard a centre and re-add it later. It follows that $(B_t:t\in V(T))$ is a tree-decomposition of $H$.

\section{Proof}

Say that a \defn{graph-priority pair} is a pair $Q=(G,\preceq)$ where $G$ is a graph and $\preceq$ is a total ordering of $V(G)$. For distinct $u,v\in V(G)$, if $u\prec v$, then we say that $u$ has \defn{lower priority} than $v$ in $Q$, and if $v\prec u$, then we say that $u$ has \defn{higher priority} than $v$ in $Q$. We consider $u$ to have neither higher nor lower priority than itself. For a binary condition $\beta$ on $V(G)$ such that at least one vertex satisfies $\beta$, there exists exactly one vertex $v\in V(G)$ that satisfies $\beta$ and has higher priority in $Q$ than each $u\in V(G)\setminus \{v\}$ that also satisfies $\beta$. We call $v$ the vertex of \defn{maximum priority} in $Q$ that satisfies $\beta$.

Let $Q=(G,\preceq)$ be a graph-priority pair, and let $B\subseteq V(G)$. The \defn{penalty map} for $B$ in $Q$, denoted \defn{$\sigma_Q^B$}, is the map from $B$ to $\{0,\dots,|B|-1\}$ that sends each $x\in B$ to $|\{x'\in B:x\prec x'\}|$. So $\sigma_Q^B$ counts the number of vertices in $B$ of higher priority than $x$. Note that $\sigma_Q^B$ is a bijection, and that $\sigma_Q^B(x')<\sigma_Q^B(x)$ whenever $x,x'\in B$ is such that $x'$ has higher priority in $Q$ than $x$. For each $x\in B$, we call $\sigma_Q^B$ the \defn{penalty} of $x$ in $Q$.

For $\ell\in \RanIn{R}{0}$, and $B'\subseteq B$, the \defn{$(B,\ell)$-security} of $B'$ in $Q$, denoted \defn{$S_Q^{B,\ell}(B')$}, is $$\bigcup_{x\in B'}N_G^{(|B|-1-\sigma^B_Q(x))\ell}(x).$$ The \defn{$(B,\ell)$-coverage} of $B'$ in $Q$, denoted \defn{$C_Q^{B,\ell}(B')$}, is $$\bigcup_{x\in B'}N_G^{(|B|-\sigma^B_Q(x))\ell}(x).$$ Note that $B'\subseteq S_Q^{B,\ell}(B')$ (as $\sigma^B_Q(x)\leq |B|-1$), and that $N_G^{\ell}(S_Q^{B,\ell}(B'))\subseteq C_Q^{B,\ell}(B')$. If $B'$ is a singleton $\{x\}$, we instead write \defn{$S_Q^{B,\ell}(x)$} and \defn{$C_Q^{B,\ell}(x)$} for the $(B,\ell)$-security and $(B,\ell)$-coverage respectively.

For $J\subseteq V(G)$ and $x\in B$, the \defn{$(B,\ell)$-leak} of $x$ through $J$ in $Q$, denoted \defn{$L_Q^{B,\ell}(x,J)$}, is the set of vertices $v\in V(G)$ such that: 
\begin{enumerate}
    \item there exists $z\in J$ such that $\dist_G(v,z)\leq (|B|-\sigma^B_Q(x))\ell-\dist_G(x,z)$, and
    \item there does not exist $x'\in B$ with higher priority than $x$ in $Q$ such that $v\in C_Q^{B,\ell}(x')$.
\end{enumerate}

Before we start the main proof, we first need the following lemma, which says how to pick `new centres'.

\begin{lemma}
    \label{changeProviders}
    Let $Q=(G,\preceq)$ be a graph-priority pair, let $\ell\in \RanIn{R}{0}$, and let $B,J\subseteq V(G)$ be such that:
    \begin{enumerate}
        \item $J\cap S_Q^{B,\ell}(B)$ separates $B$ from $J$ in $G$, and,
        \item for each $u\in B$ and $v\in J\setminus S_Q^{B,\ell}(B)$, $u$ has higher priority than $v$ in $Q$.
    \end{enumerate}
    Then there exists $B'\subseteq V(G)$ such that:
    \begin{enumerate}
        \item $|B|\leq |B'|\leq \max(|B|,|J|)$,
        \item $J\subseteq S_Q^{B',\ell}(B')$,
        \item $B'\setminus B=J\setminus S_Q^{B,\ell}(B)$
        \item for each $x\in B\setminus B'$, $L_Q^{B,\ell}(x,J)\subseteq S_Q^{B',\ell}(B\cap B')$.
    \end{enumerate}
\end{lemma}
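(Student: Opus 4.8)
The plan is to build $B'$ by hand as $B' := K \cup N$, where $N := J \setminus S_Q^{B,\ell}(B)$ is the forced set of new centres (the third required property will read $B' \setminus B = N$) and $K \subseteq B$ is a carefully chosen set of surviving old centres. Note first that $B \subseteq S_Q^{B,\ell}(B)$ (since $\sigma_Q^B(x) \leq |B|-1$ for all $x \in B$), so $N \cap B = \emptyset$; write also $J_{\mathrm s} := J \cap S_Q^{B,\ell}(B) = J \setminus N$. The key structural consequence of the priority hypothesis is this: for any $K \subseteq B$ and $B' = K \cup N$, every vertex of $K \subseteq B$ has higher priority than every vertex of $N$, so for $y \in K$ we get $\sigma_Q^{B'}(y) = |\{w \in K : y \prec w\}|$, whence $\sigma_Q^{B'}(y) \leq \sigma_Q^B(y)$ with deficit exactly the number of discarded centres of higher priority than $y$. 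Since we will also arrange $|B'| \geq |B|$, this means the securities of surviving centres never shrink when passing from $B$ to $B'$, and grow around $y$ by at least one $\ell$-neighbourhood as soon as some centre of higher priority than $y$ is discarded. That extra $\ell$-neighbourhood is precisely what will convert ``$v$ is in the coverage of $y$'' into ``$v$ is in the $(B',\ell)$-security of $y$''.

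To choose $K$: for each $z \in J_{\mathrm s}$ let $p(z) \in B$ be the vertex of maximum priority among those minimising $\dist_G(\cdot,z) + \sigma_Q^B(\cdot)\,\ell$ over $B$ (the ``closest centre relative to its age'' from the proof sketch), and let $P := \{p(z) : z \in J_{\mathrm s}\}$, so $|P| \leq |J_{\mathrm s}|$. Since $z \in S_Q^{B,\ell}(B)$, some $x_0 \in B$ has $\dist_G(x_0,z) + \sigma_Q^B(x_0)\ell \leq (|B|-1)\ell$, so minimality gives $\dist_G(p(z),z) \leq (|B|-1-\sigma_Q^B(p(z)))\ell$; this is the inequality that, together with the monotonicity above, certifies $z \in S_Q^{B',\ell}(p(z))$. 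Now let $K \subseteq B$ be any set with $P \subseteq K$ and $|K| = \max(|P|, |B|-|N|)$ (possible since this quantity is $\leq |B|$), and set $B' := K \cup N$, so $B \cap B' = K$. The first and third required properties are then pure counting: $|B'| = |K|+|N|$ lies between $|B|$ and $\max(|J|,|B|)$, using $|P| \leq |J_{\mathrm s}|$ and $|J| = |J_{\mathrm s}|+|N|$; and $B' \setminus B = N$. The second property holds because $N \subseteq B' \subseteq S_Q^{B',\ell}(B')$, while every $z \in J_{\mathrm s}$ is secured by $p(z) \in K$ via the displayed inequality, $\sigma_Q^{B'}(p(z)) \leq \sigma_Q^B(p(z))$, and $|B'| \geq |B|$.

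The real content is the fourth property. Fix a discarded centre $x \in B \setminus K$ and $v \in L_Q^{B,\ell}(x,J)$, witnessed by some $z \in J$ with $\dist_G(x,z) + \dist_G(v,z) \leq (|B|-\sigma_Q^B(x))\ell$. The first move is to push the witness into $J_{\mathrm s}$: a shortest path from $x$ to $z$ must meet the separator $J_{\mathrm s} = J \cap S_Q^{B,\ell}(B)$ at some $z'$ with $\dist_G(x,z) = \dist_G(x,z') + \dist_G(z',z)$, and then the triangle inequality shows $z' \in J_{\mathrm s}$ is again a witness for $v \in L_Q^{B,\ell}(x,J)$; so assume $z \in J_{\mathrm s}$ and set $y := p(z) \in K$. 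Minimality of $p(z)$ gives $\dist_G(y,z) \leq \dist_G(x,z) + (\sigma_Q^B(x)-\sigma_Q^B(y))\ell$, and combining this with the triangle inequality and the witness inequality yields $\dist_G(y,v) \leq (|B|-\sigma_Q^B(y))\ell$, that is, $v \in C_Q^{B,\ell}(y)$. By clause (2) of the definition of leak, $y$ (which lies in $B$) cannot have higher priority than $x$; since $y \in K$ and $x \notin K$ force $y \neq x$, we get $y \prec x$. Thus $x$ is a discarded centre of higher priority than $y$, so $\sigma_Q^B(y) - \sigma_Q^{B'}(y) \geq 1$; together with $|B'| \geq |B|$ this gives $|B|-\sigma_Q^B(y) \leq |B'|-1-\sigma_Q^{B'}(y)$, hence $\dist_G(y,v) \leq (|B'|-1-\sigma_Q^{B'}(y))\ell$, i.e.\ $v \in S_Q^{B',\ell}(y) \subseteq S_Q^{B',\ell}(B \cap B')$, as required.

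The main obstacle is this last step, and within it two points must click: that the separation hypothesis lets one assume the leak-witness lies in $J_{\mathrm s}$ (so that it has a provider inside $K$), and that clause (2) of the leak forces the provider $y$ to have strictly lower priority than the discarded centre $x$ — which is exactly what supplies the one-unit drop in penalty needed to upgrade ``coverage'' to ``security'' in $B'$. The degenerate cases $B = \emptyset$ (then $J_{\mathrm s} = P = K = \emptyset$ and $B' = J$) and $\ell = 0$ (neighbourhoods collapse to singletons) should be checked separately, but they fall out of the same construction.
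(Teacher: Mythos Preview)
Your proof is correct and follows essentially the same construction as the paper: your sets $N$, $P$, $K$ coincide with the paper's $J\setminus D$, $A$, $B\setminus X$, and the penalty-drop argument for property~(4) is identical. One pleasant difference is that where the paper proves a separate ``consistency'' claim (that $a(u)=a(v)$ when $u$ lies on a shortest $v$--$a(v)$ path) to show $a(z)\in A$ for an arbitrary witness $z\in J$, you instead first slide the witness along a shortest $x$--$z$ path into $J_{\mathrm s}$ so that $p(z')\in P$ is automatic; this sidesteps that claim entirely and is a small but genuine simplification.
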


\begin{proof}
    For brevity, we define $\sigma:=\sigma_Q^B$, and once $B'$ is defined, we set $\sigma':=\sigma_Q^{B'}$. For $B^*\subseteq B$, we also define $(S(B^*),C(B^*)):=(S_Q^{B,\ell}(B^*),C_Q^{B,\ell}(B^*))$. If $B^*=\{x\}$ for some $x\in B$, we abbreviate $S(\{x\})$ as $S(x)$ and $C(\{x\})$ as $C(x)$. Once $B'$ is defined, for $B^*\subseteq B'$, we similarly define $(S'(B^*),C'(B^*)):=(S_Q^{B',\ell}(B^*),C_Q^{B',\ell}(B^*))$. Again, if $B^*=\{x\}$ for some $x\in B'$, we abbreviate $S'(\{x\})$ as $S'(x)$ and $C'(\{x\})$ as $C'(x)$. Finally, for $x\in B$, we define $L(x):=L_Q^{B,\ell}(x,J)$.

    Under these abbreviations, we have:
    \begin{enumerate}
        \item $S(B)\cap J$ separates $B$ from $J$ in $G$, and
        \item for each $u\in B$ and $v\in J\setminus S(B)$, $u$ has higher priority than $v$ in $Q$,
    \end{enumerate}
    and we must show that:
    \begin{enumerate}
        \item $|B|\leq |B'|\leq \max(|B|,|J|)$,
        \item $J\subseteq S'(B')$,
        \item $B'\setminus B=J\setminus S(B)$, and
        \item for each $x\in B\setminus B'$, $L(x)\subseteq S'(B\cap B')$.
    \end{enumerate}

    If $B=\emptyset$, the lemma is trivially satisfied with $B':=J$, as $J\subseteq S_Q^{J,\ell}(J)=S'(B')$ and $S(B)=\emptyset$ in this case. So we may assume $B\neq \emptyset$.

    For each $v\in J$, let $a(v)$ be the $x\in B$ that minimises $\dist_G(x,v)+\sigma(x)\ell$, and, subject to this, is of maximum priority in $Q$. Note that $a(v)$ exists as $B\neq \emptyset$.

    We will say that $v\in J$ `belongs' to the `centre' $a(v)$. We first show that the choice of `belonging' is `consistent' if a vertex $u\in J$ lies on the shortest path from $v$ to $a(v)$.

    \begin{claim}
        \label{claimAncSeps}
        If $u,v\in J$ are such that $\dist_G(v,a(v))=\dist_G(u,v)+\dist_G(u,a(v))$, then $a(u)=a(v)$.
    \end{claim}

    \begin{proofofclaim}
        By assumption on $u$ and $v$,
        \begin{equation}
            \label{claimAncSepsEq1}
            \dist_G(v,a(v))+\sigma(a(v))\ell=\dist_G(u,v)+\dist_G(u,a(v))+\sigma(a(v))\ell.
        \end{equation}
        By triangle inequality,
        \begin{equation}
            \label{claimAncSepsEq2}
            \dist_G(v,a(u))+\sigma(a(u))\ell\leq \dist_G(u,v)+\dist_G(u,a(u))+\sigma(a(u))\ell.
        \end{equation} 
        By choice of $a(u)$ and $a(v)$,
        \begin{subequations}
            \begin{align}
                \label{claimAncSepsEq3a}
                \dist_G(u,a(u))+\sigma(a(u))\ell&\leq \dist_G(u,a(v))+\sigma(a(v))\ell\text{, and}\\
                \label{claimAncSepsEq3b}
                \dist_G(v,a(v))+\sigma(a(v))\ell&\leq \dist_G(v,a(u))+\sigma(a(u))\ell.
            \end{align}
        \end{subequations}
        Applying \eqref{claimAncSepsEq2}, \eqref{claimAncSepsEq3a}, \eqref{claimAncSepsEq1} (in that order) and then comparing with \eqref{claimAncSepsEq3b},
        \begin{equation}
            \label{claimAncSepsEq4}
            \dist_G(v,a(u))+\sigma(a(u))\ell= \dist_G(v,a(v))+\sigma(a(v))\ell.
        \end{equation}
        By choice of $a(v)$, \eqref{claimAncSepsEq4} implies that $a(u)$ does not have higher priority than $a(v)$ in $Q$.

        Applying \eqref{claimAncSepsEq1}, \eqref{claimAncSepsEq4}, and \eqref{claimAncSepsEq2} (in that order), cancelling out $\dist_G(u,v)$, and then comparing with \eqref{claimAncSepsEq3a},
        \begin{equation}
            \label{claimAncSepsEq5}
            \dist_G(u,a(u))+\sigma(a(u))\ell= \dist_G(u,a(v))+\sigma(a(v))\ell.
        \end{equation}
        By choice of $a(u)$, \eqref{claimAncSepsEq5} implies that $a(v)$ does not have higher priority than $a(u)$ in $Q$. Since for all pairs of distinct vertices, one will have higher priority in $Q$ than the other, this implies $a(u)=a(v)$, as desired.
    \end{proofofclaim}

    We next show that if a vertex is in the security of $B$, it lies in the security of the centre $a(v)$ it belongs to.
    
    \begin{claim}
        \label{claimSecureInAnc}
        For each $v\in S(B)\cap J$, $v\in S(a(v))$.
    \end{claim}

    \begin{proofofclaim}
        Since $v\in S(B)$, there exists $x\in B$ such that $v\in S(x)$, and thus $\dist_G(v,x)\leq (|B|-1-\sigma(x))\ell$. Hence, $\dist_G(v,x)+\sigma(x)\ell\leq (|B|-1)\ell$. Since $v\in J$, $a(v)$ is well-defined, and by definition we have $\dist_G(v,a(v))+\sigma(a(v))\ell\leq \dist_G(v,x)+\sigma(x)\ell\leq (|B|-1)\ell$. Thus, $\dist_G(v,a(v))\leq (|B|-1-\sigma(a(v)))\ell$, so $v\in S(a(v))$, as desired.
    \end{proofofclaim}
    
    For each $x\in B$, let $d(x)$ be the set consisting of the $v\in J$ such that $a(v)=x$ and $v\in S(x)$. Let $D:=\bigcup_{x\in B}d(x)$. Note that $D\subseteq J$.

    \begin{claim}
        \label{claimDIsSecurity}
        $D=S(B)\cap J$.
    \end{claim}

    \begin{proofofclaim}
        For each $v\in D$, we have $v\in S(a(v))\subseteq S(B)$. Since $D\subseteq J$, it follows that $v\in S(B)\cap J$. For each $v\in S(B)\cap J$, by \cref{claimSecureInAnc}, $v\in S(a(v))$. So $v\in d(a(v))$, and thus $v\in D$. The claim follows.
    \end{proofofclaim}

    If $D=J$, then by \cref{claimDIsSecurity}, $J\subseteq S(B)$. Hence, in this case, the lemma is true with $B':=B$ (as $S'(B')=S(B)$). Thus and since $D\subseteq J$, we may assume that $D\subsetneq J$.

    Let $A:=\{x\in B:d(x)\neq \emptyset\}$. Noting that the $d(x)$, $x\in B$, are pairwise disjoint and disjoint to $J\setminus D$, observe that $|A|+|J\setminus D|\leq |J|$.

    Let $m:=\min(|J\setminus D|,|B\setminus A|)$, and let $X$ be a subset of $B\setminus A$ of size exactly $m$ (which exists by choice of $m$). Let $B':=(B\setminus X)\cup (J\setminus D)$.

    By \cref{claimDIsSecurity} and since $B\subseteq S(B)$, observe that $J\cap B\subseteq J\cap D$. Thus, $J\setminus D$ is disjoint to $B$. Hence and since $X\subseteq B$ and has size $m$, $|B'|=|B|-m+|J\setminus D|$. If $m=|J\setminus D|$, then observe that $|B'|=|B|$. Otherwise, observe that $m=|B\setminus A|\leq |J\setminus D|$, and $|B'|=|B|-|B\setminus A|+|J\setminus D|$. As $|B\setminus A|\leq |J\setminus D|$, we obtain $|B|\leq |B'|$. However, we also have that $|B|-|B\setminus A|+|J\setminus D|=|A|+|J\setminus D|\leq |J|$. So $|B'|\leq |J|$. Hence, in either scenario, we find that $|B|\leq |B'|\leq \max(|B|,|J|)$.

    Recalling that $J\setminus B$ is disjoint to $B$, we have $B\cap B'=B\setminus X$ and $B'\setminus B=J\setminus D=J\setminus S(B)$ by \cref{claimDIsSecurity}. Note also that $B\setminus B'=X$.

    We next show that $\sigma'$ is smaller than $\sigma$ on $B\cap B'$, and give a sufficient condition for it to be strictly smaller.

    \begin{claim}
        \label{claimPenaltyDecrease}
        For each $x\in B\cap B'$, $\sigma'(x)\leq \sigma(x)$. Further, if there exists $y\in B\setminus B'$ with higher priority in $Q$ than $x$, then $\sigma'(x)\leq \sigma(x)-1$.
    \end{claim}

    \begin{proofofclaim}
        Recall that $\sigma(x)$ and $\sigma'(x)$ count the number of vertices in $B$ and $B'$ respectively with higher priority than $x$ in $Q$. Since $x\in B\cap B'\subseteq B$ has higher priority in $Q$ than each $v\in B'\setminus B=J\setminus S(B)$, it follows that the only vertices in $B'$ with higher priority than $x$ are contained in $B$. This gives $\sigma'(x)\leq \sigma(x)$. Further, if such a vertex $y$ exists, then there is at least one fewer vertex in $B'$ with higher priority in $Q$ than $x$ compared to $B$. This gives $\sigma'(x)-1\leq \sigma(x)$.
    \end{proofofclaim}

    We now show that $D$ is contained in the `new' security of $B\cap B'$.

    \begin{claim}
        \label{claimDSecureBySubset}
        $D\subseteq S'(B\cap B')$.
    \end{claim}

    \begin{proofofclaim}
        Fix $v\in D$. Since $v\in D$, $v\in d(x)$ for some $x\in B$. By definition of $d(x)$, we find that $x=a(v)$ and $v\in S(x)=S(a(v))$. As $v\in d(a(v))$, we have that $a(v)\in A$, and thus $a(v)\in B\setminus X\subseteq B'$ (as $X\subseteq B\setminus A$ and since $A\subseteq B$). So $a(v)\in B\cap B'$.

        Since $v\in S(a(v))$, we have $\dist_G(v,a(v))\leq (|B|-\sigma(a(v)))\ell$. By \cref{claimPenaltyDecrease}, $\sigma'(a(v))\leq \sigma(a(v))$. Recall also that $|B|\leq |B'|$. Thus, $\dist_G(v,a(v))\leq (|B'|-\sigma'(a(v)))\ell$. This gives $v\in S'(a(v))\subseteq S'(B\cap B')$, as desired.
    \end{proofofclaim}

    Noting that $J\setminus D\subseteq S'(J\setminus D)=S'(B'\setminus B)$ and observing that $S'(B')=S'(B'\setminus B)\cup S'(B'\cap B)$, it follows from \cref{claimDSecureBySubset} that $J\subseteq S'(B')$.

    Fix $x\in B\setminus B'=X$. We must show that $L(x)\subseteq S'(B\cap B')$. Let $v\in L(x)$. So:
    \begin{enumerate}
        \item there exists $z\in J$ such that $\dist_G(v,z)\leq (|B|-\sigma(x))\ell - \dist_G(x,z)$, and
        \item there does not exist $x'\in B$ with higher priority than $x$ in $Q$ such that $v\in C(x')$.
    \end{enumerate}

    Since $J\cap S(B)$ separates $B$ from $J$, there exists $w\in J\cap S(B)$ such that $\dist_G(z,a(z))=\dist_G(z,w)+\dist_G(w,a(z))$. By \cref{claimAncSeps} (with $(u,v):=(w,z)$), $a(z)=a(w)$. By \cref{claimSecureInAnc}, $w\in S(a(w))$. Hence, $w\in d(a(w))$, and thus $a(w)=a(z)\in A\subseteq B\setminus X=B\cap B'$.

    Since $a(z)\in A$ and $x\notin A$ (as $x\in B\setminus B'=X$), $a(z)\neq x$. By definition of $a(z)$, we have that $\dist_G(z,a(z))+\sigma(a(z))\ell\leq \dist_G(z,x)+\sigma(x)\ell$. Therefore, $(|B|-\sigma(x))\ell - \dist_G(x,z)\leq (|B|-\sigma(a(z)))\ell - \dist_G(a(z),z)$. Thus and since $v\in L(x)$, we obtain $\dist_G(v,z)\leq (|B|-\sigma(a(z)))\ell - \dist_G(a(z),z)$. Hence and by triangle inequality, we obtain $\dist_G(v,a(z))\leq \dist_G(v,z)+\dist_G(a(z),z)\leq (|B|-\sigma(a(z)))\ell$. Thus, $v\in C(a(z))$. Since $v\in L(x)$ (and since $x\neq a(z)$), this implies that $x$ has higher priority than $a(z)$ in $Q$.

    By \cref{claimPenaltyDecrease} (with $(x,y):=(a(z),x)$), $\sigma'(a(z))\leq \sigma(a(z))-1$. Since $\dist_G(v,a(z))\leq (|B|-\sigma(a(z)))\ell\leq (|B'|-\sigma(a(z)))\ell$ (as $|B|\leq |B'|$), this gives $\dist_G(v,a(z))\leq (|B'|-1-\sigma'(a(z)))\ell$. So $v\in S'(a(z))\subseteq S'(B\cap B')$. It follows that $L(x)\subseteq S'(B\cap B')$.
    
    This completes the proof of the lemma.
\end{proof}

We can now get into the main proof.

\begin{theorem}
    \label{compressingExists}
    Let $k\in \ds{N}$ and $\ell\in \RanIn{R}{0}$, and let $(J_t:t\in V(T))$ be a tree-decomposition of a graph $G$ of width at most $k$. Then there exists a graph $H$, a $T$-decomposition $(B_t:t\in V(T))$ of $H$ of width at most $k$, and an $\ell$-compressing $H$-indexed partition $(P_x:x\in V(H))$ of $G$ whose parts each have weak diameter in $G$ at most $2(k+1)\ell$.
\end{theorem}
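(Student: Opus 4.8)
The plan is to carry out the induction along $T$ that was sketched in \cref{SecCompressing}, rooting $T$ at some vertex $r$, and processing vertices of $T$ in a non-decreasing order of distance from $r$. For each $t\in V(T)$ I will maintain a set $B_t\subseteq V(G)$ of ``centres'' with $|B_t|\leq k+1$, together with a fixed total ordering $\preceq$ of all the centres ever created (this will be the priority order underlying the graph-priority pair $Q=(G,\preceq)$ to which \cref{changeProviders} is applied, after extending $\preceq$ arbitrarily to all of $V(G)$). The vertex set of $H$ is the union $\bigcup_{t\in V(T)}B_t$; I set $P_x:=\{v\in V(G): x \text{ is the maximum-priority centre with } v\in C_Q^{B_{r(v)},\ell}(x)\}$, where $r(v)$ is the vertex of $T$ closest to $r$ with $v\in J_{r(v)}$ and $C$ denotes the coverage from the preliminaries; here one checks, using the separator property of bags, that some centre in $B_{r(v)}$ does cover $v$, so $P_x$ is well-defined and $(P_x:x\in V(H))$ partitions $V(G)$.

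The inductive construction: set $B_r:=J_r$ (all at penalty-order determined by some arbitrary fixed ordering of $J_r$, with every vertex of $J_r$ having higher priority than everything created later). For $t\neq r$ with parent $t'$, apply \cref{changeProviders} with $B:=B_{t'}$, $J:=J_t$, and the current $Q$; this requires verifying the two hypotheses, namely that $J_t\cap S_Q^{B_{t'},\ell}(B_{t'})$ separates $B_{t'}$ from $J_t$ (this follows because $J_t\cap J_{t'}$ is a separator between $J_t$ and everything ``above'' $t'$, and the inductive invariant $J_{t'}\subseteq S_Q^{B_{t'},\ell}(B_{t'})$), and that centres in $B_{t'}$ outrank the vertices of $J_t\setminus S_Q^{B_{t'},\ell}(B_{t'})$ (true since those vertices lie in $J_t\setminus J_{t'}$ by the invariant and hence were not yet created). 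The lemma hands back $B':=B_t$; I extend $\preceq$ by declaring the new centres $B_t\setminus B_{t'}=J_t\setminus S(B_{t'})$ to have lower priority than all existing centres, ordered among themselves arbitrarily. Conclusion (1) of the lemma gives $|B_t|\leq\max(|B_{t'}|,|J_t|)\leq k+1$ by induction; conclusion (2) maintains the invariant $J_t\subseteq S_Q^{B_t,\ell}(B_t)$; conclusion (3), together with the fact that new centres are always freshly created vertices of $J_t\setminus J_{t'}$, ensures no centre is discarded at one node and reintroduced at a descendant, which is exactly what makes $(B_t:t\in V(T))$ satisfy the connectivity axiom of a tree-decomposition of $H$ (the edge axiom is immediate from the definition of $E(H)$: $xy\in E(H)$ iff $x,y\in B_t$ for some $t$).

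It remains to verify the two metric conclusions. For the weak diameter bound: if $u,v\in P_x$ then $u\in C_Q^{B_{r(u)},\ell}(x)$ and $v\in C_Q^{B_{r(v)},\ell}(x)$, so each is within $(k+1)\ell$ of $x$ (the coverage is contained in the $(|B|-\sigma)\ell$-neighbourhood, and $|B|\leq k+1$, $\sigma\geq 0$), hence $\dist_G(u,v)\leq 2(k+1)\ell$. For $\ell$-compressing: suppose $u\in P_x$, $v\in P_y$ with $\dist_G(u,v)\leq\ell$ and $x\neq y$; I must show $xy\in E(H)$, i.e. $x,y\in B_t$ for a common $t$. Here is where the ``leak'' machinery pays off, and this is the main obstacle. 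Consider $t^*$ the node of $T$ closest to $r$ among $\{r(u),r(v)\}$, say $t^*=r(v)$; then $v\in C_Q^{B_{t^*},\ell}(y)$ and I claim $u\in C_Q^{B_{t^*},\ell}(\cdot)$ forces $u$'s assigned centre to live in $B_{t^*}$ as well. Tracking down through $T$ from $t^*$ to $r(u)$, at each step where a centre covering the relevant vertex is discarded, conclusion (4) of \cref{changeProviders} (applied with that centre as $x$ and noting $u$ lies in its leak through the appropriate bag, using $\dist_G(u,v)\leq\ell$ and that bags are separators) shows the vertex passes into the \emph{security}, hence coverage, of a retained centre of at least as high priority; since the priority order always ranks retained/earlier centres above newly created ones, $u$'s maximum-priority covering centre at level $r(u)$ is one that already existed at level $t^*$, forcing $x\in B_{t^*}$. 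Combined with $y\in B_{t^*}$, we get $xy\in E(H)$. I expect formalising this descent — correctly invoking the leak condition with the right choice of $z\in J_t\cap J_{t'}$ on a shortest $u$–$v$ path and propagating the ``covered by a high-priority retained centre'' invariant down the tree — to be the delicate part; everything else is bookkeeping supported directly by \cref{changeProviders} and the definitions in the preliminaries.
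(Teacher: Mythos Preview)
Your construction of the priority order, the bags $B_t$ (via \cref{changeProviders}), and the graph $H$ matches the paper exactly. The gaps are in how you define the parts and how you argue $\ell$-compression.

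\textbf{The assignment rule.} You set $P_x:=\{v: x\text{ is the max-priority centre in }B_{r(v)}\text{ with }v\in C_Q^{B_{r(v)},\ell}(x)\}$, forcing the chosen centre to lie in $B_{r(v)}$. The paper instead assigns $v$ to the max-priority $x$ for which there exists \emph{some} ancestor $t^*$ of $r(v)$ with $x\in B_{t^*}$ and $v\in C_{t^*}(x)$; crucially, this $x$ may have been discarded before $r(v)$ and need not lie in $B_{r(v)}$. This looser rule is what drives the paper's key step (their \cref{claimCoverageAnc}): whenever $v$ is covered at some level $t^*$ by a centre that survives back to an earlier node $t$, one immediately concludes $r(a(v))$ is an ancestor of $t$. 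Your restricted rule does not support that inference.

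\textbf{The $\ell$-compressing argument.} Two concrete errors. First, you take $t^*$ to be whichever of $r(u),r(v)$ is closer to the root and then ``descend to $r(u)$'', implicitly assuming these two nodes are comparable; they need not be (a star-shaped $T$ with $u,v$ rooted at distinct leaves is already a counterexample). The paper avoids this by first passing to the deepest common ancestor of $r(u),r(v)$ and using it to show that $r(a(u))$ and $r(a(v))$ are comparable---which again relies on the permissive assignment rule. Second, you assert that conclusion~(4) of \cref{changeProviders} yields a retained centre ``of at least as high priority'' as the discarded one. It does not: the proof of \cref{changeProviders} explicitly shows the retained centre $a(z)$ has \emph{lower} priority than the discarded $x$. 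So your descent does not maintain the invariant ``the current max-priority covering centre already lived in $B_{t^*}$''. The paper's argument is structurally different: assuming (wlog) $r(a(v))$ is an ancestor of $r(a(u))$, it supposes $a(v)\notin B_{r(a(u))}$, locates the single edge $t't$ along which $a(v)$ is dropped, shows $v$ lies in the leak there so that $u\in C_t(B_t\cap B_{t'})$, and then invokes the permissive definition of $a(u)$ to force $r(a(u))$ to be an ancestor of $t'$---contradicting that $t'$ was a strict ancestor of $r(a(u))$.
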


\begin{proof}
    Fix a root $r$ of $T$. For each $v\in V(G)$, let $T_v$ be the subtree of $T$ induced by the vertices $t\in V(T)$ such that $v\in J_t$, and let $r(v)$ be the (unique) vertex of $T_v$ closest to $r$. We call $r(v)$ the root of $v$. Note that $r(v)$ is an ancestor of each $t\in V(T_v)$ (each $t\in V(T)$ with $v\in J_t$).

    Observe that we can find a total ordering $\preceq$ of $V(G)$ such that for distinct $u,v\in V(G)$, if $r(u)$ is a strict ancestor of $r(v)$, then $v\prec u$. So $Q:=(G,\preceq)$ is a graph-priority pair. By definition of $\preceq$, when distinct $u,v\in V(G)$ are such that $r(u)$ is a strict ancestor of $r(v)$, $u$ has priority in $Q$ than $v$.

    For each $t\in V(T)$, we will define a set $B_t\subseteq V(G)$. For brevity, we define $\sigma_t:=\sigma_Q^{B_t}$, which we call the penalty map at $t$. For $B^*\subseteq B_t$, we also define $(S_t(B^*),C_t(B^*)):=(S_Q^{B_t,\ell}(B^*),C_Q^{B_t,\ell}(B^*))$, which we call the security and coverage respectively of $B^*$ at $t$. If $B^*=\{x\}$ for some $x\in B$, we abbreviate $S_t(\{x\})$ as $S_t(x)$ and $C_t(\{x\})$ as $C_t(x)$. Finally, for $x\in B_t$ and child $t^*$ of $t$, we define $L_t(x,t^*):=L_Q^{B_t,\ell}(x,J_{t^*})$, which we call the leak from $x$ through $t^*$ at $t$.
    
    For each $t\in V(T)$, $B_t$ will satisfy the following properties: 
    \begin{enumerate}
        \item $J_t\subseteq S_t(B_t)$, and
        \item $r(x)$ is an ancestor of $t$ for each $x\in B_t$.
    \end{enumerate}

    If $t\neq r$, let $t'$ be the parent of $t$. We then further require that:

    \begin{enumerate}
        \item $|B_t'|\leq |B_t|\leq \max(|B_t'|,|J_t|)$,
        \item $B_t\setminus B_{t'}=J_t\setminus S_{t'}(B_{t'})$, and
        \item for each $x\in B_{t'}\setminus B_t$, $L_{t'}(x,t)\subseteq S_t(B_t\cap B_{t'})$.
    \end{enumerate}

    We define $B_t$ inductively on the distance from $t$ to $r$. In the base case $t=r$, we set $B_r:=J_r$, which trivially satisfies the above requirements (as $J_r\subseteq S_r(J_r)$).

    We now proceed to the inductive step. Note that $t\neq r$, so the parent $t'$ of $t$ is defined. Further, $B_{t'}$ is already defined, and has the above properties.

    Since $J_{t'}\subseteq S_{t'}(B_{t'})$, observe that $J_t\cap J_{t'}\subseteq J_t\cap S_{t'}(B_{t'})$ and $J_t\setminus S_{t'}(B_{t'})\subseteq J_t\setminus J_{t'}$. Since $B_{t'}\subseteq V(G)$ and since $r(x)$ is an ancestor of $t'$ for each $x\in B_{t'}$, observe that $J_t\cap J_{t'}$, and thus $J_t\cap S_{t'}(B_{t'})$, separates $B_{t'}$ from $J_t$ in $G$. For each $u\in B_{t'}$ and $v\in J_t\setminus S_{t'}(B_{t'})\subseteq J_t\setminus J_{t'}$, observe that $r(v)=t$, and that $r(u)$ is an ancestor of $t'$. Hence, $r(u)$ is a strict ancestor of $r(v)$, so $u$ has higher priority in $Q$ than $v$.
    
    Thus, we can apply \cref{changeProviders} with $(B,J):=(B_{t'},J_t)$. Let $B_t:=B'$ be the result. We have that:
    \begin{enumerate}
        \item $|B_t'|\leq |B_t|\leq \max(|B_t'|,|J_t|)$,
        \item $J_t\subseteq S_t(B_t)$,
        \item $B_t\setminus B_{t'}=J_t\setminus S_{t'}(B_{t'})$, and
        \item for each $x\in B_{t'}\setminus B_t$, $L_{t'}(x,t)\subseteq S_t(B_t\cap B_{t'})$.
    \end{enumerate}

    To satisfy the inductive conditions, it remains only to show that for each $x\in B_t$, $r(x)$ is an ancestor of $t$. If $x\in B_{t'}$, then $r(x)$ is an ancestor of $t'$ and thus $t$. Otherwise, since $B_t\setminus B_{t'}=J_t\setminus S_{t'}(B_{t'})\subseteq J_t\setminus J_{t'}$, we have $r(x)=t$. So $r(x)$ is trivially an ancestor of $t$.

    Let $H$ be the graph with vertex set $\bigcup_{t\in V(T)}B_t$ and edges between distinct vertices $u,v\in V(H)$ if and only if there exists $t\in V(T)$ such that $u,v\in B_t$. Note that $V(H)\subseteq V(G)$.

    For each $v\in V(G)$, let $a(v)$ be the vertex $x\in V(H)$ of maximum priority in $Q$ such that: 
    \begin{enumerate}
        \item $r(x)$ is an ancestor of $r(v)$, and
        \item there exists an ancestor $t^*(v):=t^*$ of $r(v)$ such that $x\in B_{t^*}$ and $v\in C_{t^*}(x)$.
    \end{enumerate}
    
    We remark that such a vertex $a(v)$ exists, as $v\in J_{r(v)}\subseteq S_{r(v)}(B_{r(v)})\subseteq C_{r(v)}(B_{r(v)})$, so there exists $y\in B_{r(v)}$ with $v\in C_{r(v)}(y)$. Hence, we could take $(x,t^*):=(y,r(v))$, since $r(y)$ is an ancestor of $r(v)$ as $y\in B_{r(v)}$.
    
    For each $x\in V(H)$, let $P_x:=\{v\in V(G):a(v)=x\}$. Observe that $(P_x:x\in V(H))$ is an $H$-indexed partition of $G$. We show that $(H,(B_t:t\in V(T)),(P_x:x\in V(H)))$ satisfy the theorem.

    We first need to control the size of the `bags' $B_t$, $t\in V(T)$.

    \begin{claim}
        \label{claimBagSizes}
        For each $t\in V(T)$ and each ancestor $t^*$ of $t$, $|B_{t^*}|\leq |B_t|\leq k+1$.
    \end{claim}

    \begin{proofofclaim}
        By induction on $\dist_T(r,t)+\dist_T(r,t^*)$. In the base case, $r=t^*=t$. Thus, $B_{t^*}=B_t=B_r=J_r$, so $|B_{t^*}|=|B_t|\leq k+1$ as $(J_t:t\in V(T))$ has width at most $k$. So we may proceed to the inductive step. In particular, observe that $t\neq r$, so the parent $t'$ of $t$ is well-defined.

        If $t=t^*$, then observe that $\dist_T(r,t)+\dist_T(r,r)<\dist_T(r,t)+\dist_T(r,t^*)$. So we may apply induction with $(t,t^*):=(t,r)$. We find that $|B_r|\leq |B_t|\leq k+1$. Thus, $|B_{t^*}|=|B_t|\leq k+1$, as desired.
        
        So we may assume that $t^*$ is a strict ancestor of $t$. Thus, $t^*$ is an ancestor of $t'$. Observe that $\dist_T(r,t')+\dist_T(r,t^*)<\dist_T(r,t)+\dist_T(r,t^*)$. So we may apply induction with $(t,t^*):=(t',t^*)$. We find that $|B_{t^*}|\leq |B_{t'}|\leq k+1$. Since $t'$ is the parent of $t$, we also have that $|B_{t'}|\leq |B_t|\leq \max(|B_{t'}|,|J_t|)$. Since $(J_t:t\in V(T))$ has width at most $k$, this gives $|B_{t^*}|\leq |B_t|\leq k+1$, as desired.
    \end{proofofclaim}

    In particular, for each $t\in V(T)$, by applying \cref{claimBagSizes} with $t^*:=r$, we find that $|B_t|\leq k+1$.

    We can now show that the parts of $(P_x:x\in V(H))$ have bounded weak diameter in $G$.

    \begin{claim}
        \label{claimWeakRadius}
        For each $x\in V(H)$ and $v\in P_x$, $\dist_G(x,v)\leq (k+1)\ell$.
    \end{claim}

    \begin{proofofclaim}
        Since $v\in P_x$, $a(v)=x$. Thus, by definition of $a(v)=x$ and $t^*(v)$, $v\in C_{t^*(v)}(x)$. Hence, $\dist_G(x,v)\leq (|B_{t^*(v)}|-\sigma_{t^*(v)}(x))\ell$. Since $\sigma_{t^*(v)}(x)\geq 0$ and since $|B_{t^*(v)}|\leq k+1$ (by \cref{claimBagSizes}), this gives $\dist_G(x,v)\leq (k+1)\ell$, as desired.
    \end{proofofclaim}

    It follows from \cref{claimWeakRadius} that for each $x\in V(H)$, $P_x$ has weak diameter in $G$ at most $2(k+1)\ell$.

    We next show that $(B_t:t\in V(T))$ is the desired $T$-decomposition of $H$. For each $x\in V(H)$, let $T_x'$ be the subgraph of $T$ induced by the vertices $t\in V(T)$ such that $x\in B_t$. The main difficulty is showing that $T_x'$ is connected. For this, we show that the entire path from $t$ to $r(x)$ (including $r(x)$) is contained in $V(T_x')$.

    \begin{claim}
        \label{claimIsSubtree}
        Let $x\in V(H)$ and $t^*\in V(T_x')$. Then for each vertex $t'$ on the path from $t^*$ to $r(x)$ in $T$, we have $x\in B_{t'}$.
    \end{claim}

    \begin{proofofclaim}
        By induction on the distance from $t'$ to $t^*$. In the base case, $t'=t^*$, and the claim is trivial (as $x\in B_{t^*}$ since $t^*\in V(T_x')$). So we may proceed to the inductive step. In particular, $t'\neq t^*$.

        Since $t^*\in V(T_x')$, we have $x\in B_{t^*}$. Hence, recall that $r(x)$ is an ancestor of $t^*$. Thus and since $t'\neq t^*$, there is a child $t$ of $t'$ on the path from $t^*$ to $r(x)$. Observe that $t$ is closer to $t^*$ than $t'$ is, and thus by induction $x\in B_t$.

        Presume, for a contradiction, that $x\notin B_{t'}$. So $x\in B_t\setminus B_{t'}=J_t\setminus S_{t'}(B_{t'})$. Since $J_{t'}\subseteq S_{t'}(B_{t'})$, this gives $x\in J_t\setminus J_{t'}$. Hence, we have $t,r(x)\in V(T_x)$, but $t'\notin V(T_x)$. This contradicts the fact that $T_x$ is a subtree of $T$, as $t'$ is on the unique path from $t$ to $r(x)$ in $t$ (since $r(x)$ is an ancestor of $t^*$ and since $t$ is a child of $t'$). Hence, we have $x\in B_{t'}$. The claim follows.
    \end{proofofclaim}

    We can now show that $(B_t:t\in V(T))$ is the desired $T$-decomposition of $H$. By definition of $H$, $B_t\subseteq V(H)$ for each $t\in V(T)$. For each $x\in V(H)$, by definition of $H$, $T_x'$ is nonempty, and by \cref{claimIsSubtree}, $T_x'$ is connected (as the path from any $t^*\in V(T_x')$ to $r(x)$ is contained in $T_x'$). For each $uv\in E(H)$, by definition of $H$, there exists $t\in V(T)$ such that $u,v\in B_t$. Therefore, $(B_t:t\in V(T))$ is a $T$-decomposition of $H$. Recalling that $|B_t|\leq k+1$ for each $t\in V(T)$ (by \cref{claimBagSizes}), we find that $(B_t:t\in V(T))$ has width at most $k$, as desired.

    For each $x\in V(H)$, let $r'(x)$ be the vertex of $T_x'$ closest to $r$ (which is well-defined and uniquely defined, as $T_x'$ is a nonempty subtree). Note that $r'(v)$ is an ancestor of each $t\in V(T_v')$ (each $t\in V(T)$ with $x\in B_t$).

    \begin{claim}
        \label{claimSameRoot}
        For each $x\in V(H)$, $r(x)=r'(x)$.
    \end{claim}

    \begin{proofofclaim}
        By \cref{claimIsSubtree} (and since $T_x'$ is nonempty), $r(x)\in V(T_x')$. So $r(x)$ is a descendant of $r'(x)$. As $x\in B_{r'(x)}$, recall that $r(x)$ is an ancestor of $r'(x)$. Thus, $r(x)=r'(x)$.
    \end{proofofclaim}

    We are now left with the hardest step, showing that $(P_x:x\in V(H))$ is $\ell$-compressing in $G$ (as an $H$-indexed partition).

    We first need the following fact regarding, for $v\in V(G)$, the root of $a(v)$.

    \begin{claim}
        \label{claimCoverageAnc}
        Let $v\in V(G)$ and $t,t^*\in V(T)$ be such that $t^*$ is an ancestor of $r(v)$ and a descendant of $t$. If $v\in C_{t^*}(B_t\cap B_{t^*})$, then $r(a(v))$ is an ancestor of $t$.
    \end{claim}

    \begin{proofofclaim}
        So there exists $x\in B_t\cap B_{t^*}$ such that $v\in C_{t^*}(x)$. As $x\in B_t$, $r(x)$ is an ancestor of $t$, and thus $t^*$ and $r(v)$. So we have that:
        \begin{enumerate}
            \item $r(x)$ is an ancestor of $r(v)$, and
            \item there exists an ancestor $t^*$ of $r(v)$ such that $x\in B_{t^*}$ and $v\in C_{t^*}(x)$.
        \end{enumerate}
        Hence, by choice of $a(v)$, $a(v)$ has higher priority than $x$ in $Q$.
        
        Recall that $r(a(v))$ and $r(x)$ are ancestors of $r(v)$. Thus, $r(a(v))$ is related to $r(x)$. Since $a(v)$ has higher priority than $x$ in $Q$, $r(x)$ cannot be a strict ancestor of $r(a(v))$. Thus, $r(a(v))$ is an ancestor of $r(x)$. Recalling that $r(x)$ is an ancestor of $t$, this implies that $r(a(v))$ is an ancestor of $t$, as desired.
    \end{proofofclaim}

    We can now relate the roots of $a(u),a(v)$ whenever $u,v\in V(G)$ are at distance at most $\ell$.

    \begin{claim}
        \label{claimRelatedRoots}
        If $u,v\in V(G)$ are such that $\dist_G(u,v)\leq \ell$, then $r(a(u))$ and $r(a(v))$ are related, and both are ancestors of both $r(u)$ and $r(v)$.
    \end{claim}

    \begin{proofofclaim}
        Let $t\in V(T)$ be the (unique) vertex farthest from $r$ that is an ancestor of both $r(u)$ and $r(v)$. Note that $J_t$ separates $J_{r(u)}$ from $J_{r(v)}$. Thus and since $u\in J_{r(u)}$ and $v\in J_{r(v)}$, observe that there exists $z\in J_t$ such that $\dist_G(u,v)=\dist_G(u,z)+\dist_G(z,v)$. In particular, observe that $\dist_G(u,z),\dist_G(v,z)\leq \ell$. Since $z\in J_t\subseteq S_t(B_t)$, we obtain $u,v\in N_G^{\ell}(S_t(B_t))\subseteq C_t(B_t)$. By \cref{claimCoverageAnc} (with $t^*:=t$), $r(a(u)),r(a(v))$ are both ancestors of $t$, and thus both $r(u)$ and $r(v)$. This also implies that $r(a(u))$ and $r(a(v))$ are related, completing the proof of the claim.
    \end{proofofclaim}
    
    We also need the following fact about the penalty of a vertex that appears in multiple bags $B_t$.

    \begin{claim}
        \label{claimConsistentPenalty}
        For each $t\in V(T)$, each ancestor $t^*$ of $t$, and each $x\in B_t\cap B_{t^*}$, we have $\sigma_t(x)\leq \sigma_{t^*}(x)$.
    \end{claim}

    \begin{proofofclaim}
        By \cref{claimIsSubtree,claimSameRoot}, observe that for each $y\in B_t\setminus B_{t^*}$, $r'(y)=r(y)$ is a strict descendant of $t^*$, and thus $r'(x)=r(x)$ (as $x\in B_{t^*}$). Therefore, $y$ has lower priority than $x$ in $Q$.
    
        Recall that $\sigma_t(x)$ and $\sigma_{t^*}(x)$ count the number of vertices in $B_t$ and $B_{t^*}$ respectively that have higher priority than $x$ in $Q$. It follows then that $\sigma_t(x)\leq \sigma_{t^*}(x)$, as desired.
    \end{proofofclaim}

    Finally, we need the following result regarding, for $v\in V(G)$, the penalty of vertices lying in a bag $B_t$ containing $a(v)$.

    \begin{claim}
        \label{claimNoLowerPenalty}
        If $v\in V(G)$ and $t\in V(T)$ are such that $a(v)\in B_t$ and $t$ is an ancestor of $r(v)$, then for each $x\in B_t$ with $v\in C_t(x)$, $x$ does not have higher priority than $a(v)$ in $Q$.
    \end{claim}

    \begin{proofofclaim}
        As $x\in B_t$, $r(x)$ is an ancestor of $t$, and thus $r(v)$. So we have that:
        \begin{enumerate}
            \item $r(x)$ is an ancestor of $r(v)$, and
            \item there exists an ancestor $t^*:=t$ of $r(v)$ such that $x\in B_{t^*}$ and $v\in C_{t^*}(x)$.
        \end{enumerate}
        Hence, by definition of $a(v)$, $x$ does not have higher priority than $a(v)$ in $Q$.
    \end{proofofclaim}

    To prove that $(P_x:x\in V(H))$ is $\ell$-compressing, for $u,v\in V(G)$ with $\dist_G(u,v)\leq \ell$, we need to find a bag $B_t$ with $a(u),a(v)\in B_t$ (as then $a(u),a(v)$ are adjacent in $H$).

    \begin{claim}
        \label{claimCentresOverlap}
        If $u,v\in V(G)$ are such that $\dist_G(u,v)\leq \ell$, then there exists $w\in \{u,v\}$ such that $a(u),a(v)\in B_{r(a(w))}$.
    \end{claim}

    \begin{proofofclaim}
        By \cref{claimRelatedRoots}, $r(a(u))$ and $r(a(v))$ are related and are both ancestors of both $r(u)$ and $r(v)$. Without loss of generality, say that $r(a(v))$ is an ancestor of $r(a(u))$.

        Presume, for a contradiction, that $a(v)\notin B_{r(a(u))}$. Note that $a(v)\in B_{r'(a(v))}=B_{r(a(v))}$ by \cref{claimSameRoot}. Thus and since $r(a(v))$ is an ancestor of $r(a(u))$), there exists a vertex $t'\in V(T)$ and a child $t$ of $t'$ such that:
        \begin{enumerate}
            \item $t'$ is a descendant of $r(a(v))$,
            \item $t$ is an ancestor of $r(a(u))$,
            \item $a(v)\in B_{t'}$, and
            \item $a(v)\notin B_t$.
        \end{enumerate}
        
        Note that $t'$ is a strict ancestor of $r(a(u))$, and that $t$ is a strict descendant of $r(a(v))$. Further, since $r(a(u))$ is an ancestor of $r(v)$, both $t$ and $t'$ are on the path from $r(v)$ to $r(a(v))$ in $T$.

        Recall that $v\in C_{t^*(v)}(a(v))$, so $\dist_G(v,a(v))\leq (|B_{t^*(v)}|-\sigma_{t^*(v)}(a(v)))\ell$. Since $t$ is on the path from $r(v)$ to $r(a(v))$ in $T$, $J_t$ separates $J_{r(v)}$ from $J_{r(a(v))}$ in $G$. Thus and since $v\in J_{r(v)}$, $a(v)\in J_{r(a(v))}$, there exists $z\in J_t$ such that $\dist_G(a(v),v)=\dist_G(a(v),z)+\dist_G(v,z)$. Hence, we have that $\dist_G(v,z)\leq (|B_{t^*(v)}|-\sigma_{t^*(v)}(a(v)))\ell-\dist_G(a(v),z)$.
        
        Recall that $t^*(v)$ is an ancestor of $r(v)$. Thus, $t^*(v)$ is related to both $t$ and $t'$. Recall also that $a(v)\in B_{t^*(v)}$. Thus, by \cref{claimIsSubtree,claimSameRoot} and since $a(v)\in B_{t'}\setminus B_t$, $t^*(v)$ is a strict ancestor of $t$ and an ancestor of $t'$. Hence, by \cref{claimBagSizes,claimConsistentPenalty} (with $t:=t'$), we have $|B_{t^*(v)}|\leq |B_{t'}|$ and $\sigma_{t'}(a(v))\leq \sigma_{t^*(v)}(a(v))$. Therefore, we obtain $\dist_G(v,z)\leq (|B_{t'}|-\sigma_{t'}(a(v)))\ell-\dist_G(a(v),z)$. By \cref{claimNoLowerPenalty} (with $t:=t'$), there does not exist $x\in B_{t'}$ with higher priority than $a(v)$ in $Q$ such that $v\in C_{t'}(x)$. Thus (and since $z\in J_t$), $v\in L_{t'}(a(v),t)$.

        Since $t'$ is the parent of $t$ and since $a(v)\in B_{t'}\setminus B_t$, we know that $L_{t'}(a(v),t)\subseteq S_t(B_t\cap B_{t'})$. So $v\in S_t(B_t\cap B_{t'})$, and thus $u\in N_G^{\ell}(v)\subseteq N_G^{\ell}(S_t(B_t\cap B_{t'}))\subseteq C_t(B_t\cap B_{t'})$. By \cref{claimCoverageAnc} (with $(v,t,t^*):=(u,t',t)$, recalling that $t$ is an ancestor of $r(a(u))$ and thus $r(u)$), $r(a(u))$ is an ancestor of $t'$. But this contradicts the fact that $t'$ is a strict ancestor of $r(a(u))$. 
        
        Thus, we can conclude that $a(v)\in B_{r(a(u))}$. Noting that $a(u)\in B_{r'(a(u))}=B_{r(a(u))}$ by \cref{claimSameRoot}, the claim is satisfied with $w:=u$. (Recall that $u$ was selected so that $r(a(u))$ was a descendant of $r(a(v))$).
    \end{proofofclaim}

    Let distinct $x,y\in V(H)$ be such that $\dist_G(P_x,P_y)\leq \ell$. So there exists $u\in P_x$, $v\in P_y$ with $\dist_G(u,v)\leq \ell$. Note that $a(u)=x$ and $a(v)=y$. By \cref{claimCentresOverlap}, there exists $w\in \{u,v\}$ such that $a(u)=x,a(v)=y\in B_{r(a(w))}$. Thus, by definition of $H$, $x,y$ are adjacent in $H$. Hence, $(P_x:x\in V(H))$ is $\ell$-compressing. This completes the proof of the theorem.
\end{proof}

\cref{compressingFuncExists} follows directly from \cref{compressingExists}.

\paragraph{Acknowledgements.} The author thanks David Wood for his supervision and suggestions to improve this paper.
      
\bibliography{Ref.bib}

\begin{thebibliography}{5}
\providecommand{\natexlab}[1]{#1}
\providecommand{\url}[1]{\texttt{#1}}
\expandafter\ifx\csname urlstyle\endcsname\relax
  \providecommand{\doi}[1]{doi: #1}\else
  \providecommand{\doi}{doi: \begingroup \urlstyle{rm}\Url}\fi

\bibitem[Albrechtsen et~al.(2025)Albrechtsen, Distel, and Georgakopoulos]{Albrechtsen2025K2t}
Sandra Albrechtsen, Marc Distel, and Agelos Georgakopoulos.
\newblock Excluding ${K}_{2,t}$ as a fat minor, 2025.
\newblock arXiv:2510.14644.

\bibitem[Distel(2025)]{Distel2025Cw}
Marc Distel.
\newblock An improved quasi-isometry between graphs of bounded cliquewidth and graphs of bounded treewidth, 2025.
\newblock arXiv:2505.09834.

\bibitem[Distel et~al.(2024)Distel, Hickingbotham, Seweryn, and Wood]{Distel2024Powers}
Marc Distel, Robert Hickingbotham, Micha{\l}~T. Seweryn, and David~R. Wood.
\newblock Powers of planar graphs, product structure, and blocking partitions.
\newblock \emph{Innov. Graph Theory}, 1:\penalty0 39--86, 2024.
\newblock \doi{10.5802/igt.4}.

\bibitem[Hickingbotham(2025)]{Hickingbotham2025}
Robert Hickingbotham.
\newblock Graphs quasi-isometric to graphs with bounded treewidth, 2025.
\newblock arXiv:2501.10840.

\bibitem[Nguyen et~al.(2025)Nguyen, Scott, and Seymour]{Nguyen2025}
Tung Nguyen, Alex Scott, and Paul Seymour.
\newblock Asymptotic structure. {I}. {C}oarse tree-width, 2025.
\newblock arXiv:2501.09839.

\end{thebibliography}
\end{document}